\def\zz{\mathbb{Z}}
\def\rr{\mathbb{R}}
\newcommand{\ft}{\ensuremath{F_\tau}}
\def\xx{\bar x_0}
\def\xxx{\bar x_1}
\def\yy{\bar y_0}
\def\yyy{\bar y_1}
\def\qed{\hfill\rlap{$\sqcup$}$\sqcap$\par}
\newtheorem{thm}{Theorem}[section]
\newtheorem{prop}[thm]{Proposition}
\newtheorem{lem}[thm]{Lemma}
\newtheorem{defn}[thm]{Definition}
\newtheorem{question*}{Question}
\newtheorem{question}[thm]{Question}
\title{An irrational-slope Thompson's group}
\author{Jos\'e Burillo }
\address{Departament de Matem\`atiques,
Universitat Polit\`ecnica de Catalunya,
Jordi Girona 1-3,
08034 Barcelona, Spain}
\email{pep.burillo@upc.edu}
\thanks{The first author thanks the Spanish Ministry MICINN through grant MTM2017-82740-P for their support. The second and third author were supported by the Royal Society International Exchanges grant IES\textbackslash R3\textbackslash 170086.}
\author{Brita Nucinkis}
\address{Department of Mathematics, Royal Holloway, University of London,
Egham, TW20 0EX, UK}
\email{Brita.Nucinkis@rhul.ac.uk}
\author{Lawrence Reeves}
\address{School of Mathematics and Statistics,
University of Melbourne,
VIC 3010, Australia}
\email{lreeves@unimelb.edu.au}
\date{}
\begin{document}

\maketitle

\begin{abstract}
The purpose of this paper is to study the properties of the irrational-slope Thompson's group \ft\ introduced by Cleary in \cite{clearyirr}. We construct presentations, both finite and infinite, and we describe its combinatorial structure using binary trees. We show that its commutator group is simple. Finally, inspired by the case of Thompson's group $F$, we define a unique normal form for the elements of the group and study the metric properties for the elements based on this normal form. As a corollary, we see that several embeddings of $F$ in \ft\ are undistorted.
\end{abstract}

\section*{Introduction}

Thompson's groups were introduced in the 1960s and soon captured the interest of group theorists for their interesting properties. They have spawned a family of groups that have properties similar to the original Thompson's groups $F$, $T$ and $V$, but each of which is interesting in its own right.
The purpose of this paper is to study one of these groups, namely, the group \ft\ of piecewise linear homeomorphisms of $[0,1]$ having breakpoints in $\zz[\tau]$ and slopes that are powers of $\tau$, where $\tau$ is the golden ratio $(\sqrt5-1)/2$. The group $\ft,$ although having irrational breakpoints, does share many of the properties of the original Thompson's group $F.$

The group $\ft$ was introduced by Sean Cleary in \cite{clearyirr}, where it is first described and
proved to be of type $F_\infty$. The group $\ft=G([0,1]; \zz[\tau], \langle \tau \rangle)$ is also mentioned in the Bieri-Strebel notes \cite{bieristrebel}, although finite presentations there are only given for groups with rational slopes \cite[D.15.10]{bieristrebel}.

After defining the group we devote a section to  the combinatorial structure of the group which stems from the representation of elements by pairs of finite rooted trees.

In Section \ref{prez} a presentation is obtained:

{\bf Theorem 4.4}
$$\ft = \langle x_i, y_i \,|\, a_jb_i=b_ia_{j+1},    y_i^2=x_ix_{i+1} \mbox{ for } a,b \in \{x,y\} \mbox{ and } 0\leq i<j \rangle.$$

This infinite presentation can easily be reduced to a finite presentation which  is also  given.

The subsequent two sections describe the abelianisation of \ft, the commutator subgroup, and the main simplicity result.

{\bf Theorem 6.4} {\it The group $\ft'$ is simple.}

In Section \ref{Nform} we present an explicit normal form for the elements of the group:

{\bf Theorem 7.3} {\it Each element of \ft\ has a unique normal form representative.}

As with $F$, the unique normal form is closely related to a unique reduced tree diagram for the element, but here we need
 a new condition on the normal form to account for possible cancellations which can occur after performing a basic move (as introduced in Section \ref{2}).

The final two sections deal with metric considerations. As  happens with $F$, the metric on the group can be approximated by the number of carets in the unique reduced tree pair diagram. This gives us quasi-isometric embeddings, into $\ft,$ of some natural copies of $F$.

The reader is assumed to have some familiarity with Thompson's group $F$. Many of the arguments for \ft\ will be similar to those for $F$. In order to avoid repetitions and making this paper unnecessarily long, we will refer to the corresponding constructions and results for $F$ when necessary.  A good introduction for $F$, which contains many results which apply here, can be found in \cite{cfp}.

\section{Definition and first properties}

Let $\tau$ be the small golden ratio $\frac{\sqrt5-1}{2}=0.6180339887...$, which is a zero of the polynomial $X^2+X-1$. We will consider the ring $\zz[\tau]$ of elements of the type $a+b\tau$, where $a$ and $b$ are integers. Observe that $\tau=(1+\tau)^{-1}$ is a unit of this ring, hence we can consider  the group $G([0,1];\zz[\tau],\left<\tau\right>)$ of piecewise linear orientation preserving homeomorphisms of $[0,1]$ having breakpoints in the ring $\zz[\tau]$ and slopes in the subgroup $\langle \tau \rangle$ of units of this ring. Groups like these were introduced by  Bieri and Strebel \cite{bieristrebel}. Following Cleary, \cite{clearyirr}, we denote this group  \ft. Cleary proved that the group is of type $F_\infty$, so in particular, it is finitely presented, however no explicit finite presentation was given \cite{clearyirr}. Cleary also describes a combinatorial structure for $\ft$, which we are going to develop here, since it will be used extensively throughout this paper.

Observe that, since $1=\tau+\tau^2$, the unit interval can be subdivided into two subintervals of lengths $\tau$ and $\tau^2$ respectively and this can be done in two ways:
$
[0,1]=[0,\tau]\cup[\tau,1]$  {and}  $[0,1]=[0,\tau^2]\cup[\tau^2,1]$.
Since we also have that $\tau^k=\tau^{k+1}+\tau^{k+2}$ (for all $k\ge 0$), each subinterval can be further subdivided  into intervals whose lengths are powers of $\tau$.  Elements of $\ft$ are now given by a pair of such subdivisions into $n$ intervals each, together with an order preserving bijection, see \cite{clearyirr}.

This opens the door to a combinatorial approach to $\ft$  using binary rooted trees, with a caret representing a subdivision, in a very similar fashion to Thompson's group $F$. The difficulty here is that we will need two types of carets, since  intervals can be subdivided in two ways. Hence subdivisions will be represented by a caret with two edges of different lengths. 
In Figure \ref{subdunit} we have the two subdivisions of the unit interval given above, represented by their respective carets.
An example of an iterated subdivision with its corresponding tree is given in Figure \ref{tree}.
\begin{figure}[tbh]
\centerline{\includegraphics[width=80mm]{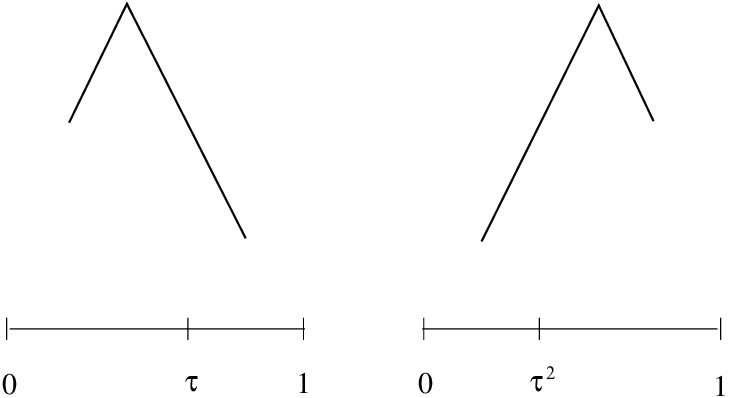}}
\caption{The two subdivisions of the unit interval and their carets.}
\label{subdunit}
\end{figure}

\begin{figure}[tbh]
\centerline{\includegraphics[width=80mm]{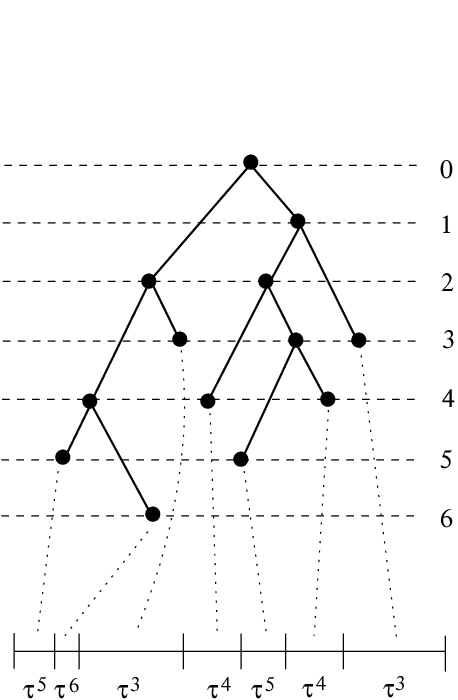}}
\caption{A tree with its corresponding subdivision and the nodes located in their corresponding levels.}
\label{tree}
\end{figure}

The reason for representing the shorter subinterval by the longer edge in the caret is that  then the nodes in the tree are organised by levels according to the lengths of the corresponding subintervals. A node at level $k$  corresponds to an interval of length $\tau^k$. In this way the tree carries more information than just the combinatorial structure of the intervals. See Figure \ref{tree} for an example.

\begin{defn} A caret with a long left edge and a short right edge is called an $x$-\emph{caret} or a caret of $x$-type, whereas the other type is called a $y$-\emph{caret}.
\end{defn}

The reason for this nomenclature will be apparent later, when we give a presentation of $\ft.$

\section{Combinatorics of the tree diagrams}\label{2}

An interesting feature of this group is that there are subdivisions which correspond to more than one tree.  The simplest example of this phenomenon is the subdivision of the unit interval into three subintervals of lengths $\tau^2,\tau^3,\tau^2$  given by
$
[0,\tau^2]\cup[\tau^2,\tau]\cup[\tau,1]
$.
 This can be represented by two trees, one with two $x$-carets and one with two $y$-carets, as shown in Figure \ref{basicmove}.
In Section 6 we produce a special unique tree pair diagram for each element of $\ft.$

\begin{figure}[tbh]
\centerline{\includegraphics[width=60mm]{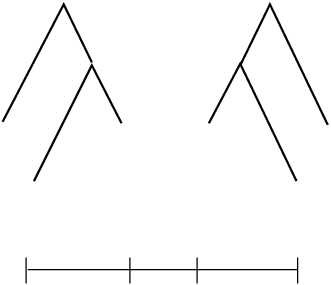}}
\caption{A subdivision of the unit interval with two trees that representing it giving rise to a basic move.}
\label{basicmove}
\end{figure}

The two trees in Figure \ref{basicmove} are crucial in what is to come. They are completely interchangeable when appearing, even as subtrees, as they represent the same subdivision of an interval.
We call the process of interchanging  these two configurations inside a tree a \emph{basic move}.  In Figure \ref{aftermove} we illustrate a basic move, represented by the thick lines, on the tree of Figure \ref{tree}.

\begin{figure}[tbh]
\centerline{\includegraphics[width=110mm]{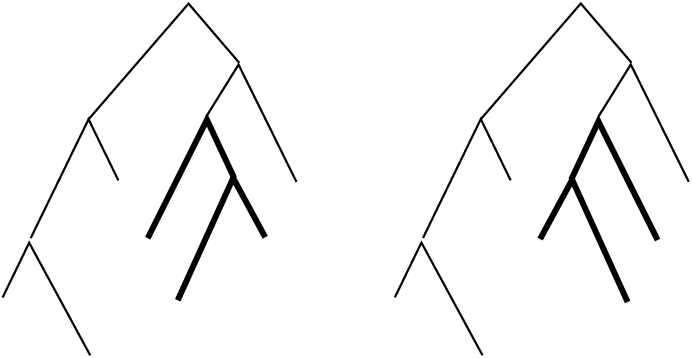}}
\caption{Performing a basic move on the tree from Figure \ref{tree}.}
\label{aftermove}
\end{figure}

 Clearly, an element can be represented by more than one tree pair diagram. Besides the usual phenomenon, familiar from $F$, where one can add or remove so-called redundant carets to obtain different tree pairs that represent the same element, here we can have two \emph{reduced} diagrams representing the same element. Recall that a reduced tree pair diagram is one without redundant carets. An example is given in Figure \ref{twodiags}, where a basic move on the right-hand-side diagram will produce redundant carets that can be removed to give the tree pair on the left.

\begin{figure}[tbh]
\centerline{\includegraphics[width=100mm]{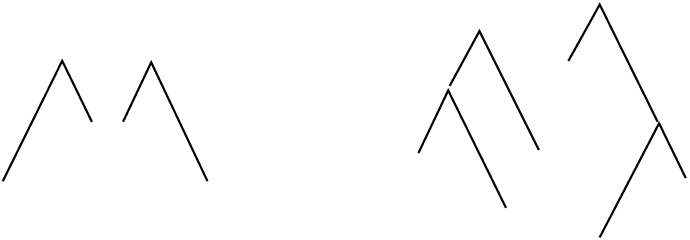}}
\caption{Two reduced tree pair diagrams representing the same element.}
\label{twodiags}
\end{figure}

Basic moves are crucial for working with the trees arising for this group. In particular, to be able to understand multiplication of elements via tree pair diagrams.  We begin by proving some important properties.

\begin{prop} \label{changecarets} By adding at most one caret to a tree $T$, any caret in $T$ can be switched from $x$-type to $y$-type or vice-versa.
\end{prop}

{\it Proof.} If the caret to be switched can have a basic move performed on it, then that switches the type. Suppose now that the short edge of this caret ends in a leaf (i.e., has no child), then add a caret of the same type to enable a basic move (see picture 2 of Figure \ref{common}). Finally, if the child caret on the short side is of the opposite type, then keep going down  short edges until a basic move can be performed. Should this process not result in a basic move, add a caret to the bottom short edge. Then one can perform multiple basic moves going back up to the caret to be switched. See pictures 3, 4 and 5 on Figure \ref{common}. \qed

The usefulness of basic moves is further illustrated by the following result, which will be used when deriving a presentation in Section \ref{prez}.

\begin{prop}\label{basicmoves}
Given two trees representing the same subdivision of the unit interval, one can always transform one into the other by a sequence of basic moves without adding additional carets.
\end{prop}

{\it Proof.} Let $T_1$ and $T_2$ be  two trees which represent the same subdivision, and assume that their root carets are different. Assume that $T_1$ has an $x$-type root caret and $T_2$ has a $y$-type root caret. Since $T_2$  has a $y$-caret at its root, the common subdivision  has a break at the point $\tau$ in the interval. This means that at $T_1$, the right edge (which is short) needs to be subdivided further, because  we need the break at $\tau$ on $T_1$ too. We are going to show that in order for the break at $\tau$ to show up in $T_1$, there have to be two consecutive carets of the same type somewhere on $T_1$. It follows that a basic move can be performed and the root caret of $T_1$ can be switched.

If the right child of the root caret in $T_1$  is of $x$-type, then we have two consecutive carets of the same type and a basic move could be performed at the root. 
Assume then that the short edge of the $x$-type root has a $y$-caret as child. Then the breaks are at the points $\tau^2$ and $2\tau^2=1-\tau^3$. Since
$
\tau^2<\tau<1-\tau^3
$, the desired breakpoint still has not been produced, see Figure \ref{fancyproof}.

The tree $T_1$ is therefore further subdivided. The point of the proof is that it is  necessary to have two consecutive subdivisions of the same type ($x$- or $y$- depending on the parity) to obtain a break at $\tau$. This is because of the following equalities (for even $n$, the odd case is similar):
$$
\tau=\tau^2+\tau^3
=\tau^2+\tau^4+\tau^5
=\tau^2+\tau^4+\tau^6+\tau^7
=\dots
=\displaystyle\sum_{k=1}^{n}\tau^{2k}+\tau^{2n+1}
$$
The odd power can only be produced with two consecutive carets of the same type, see Figure \ref{fancyproof2}.

\begin{figure}[tbh]
\centerline{\includegraphics[width=110mm]{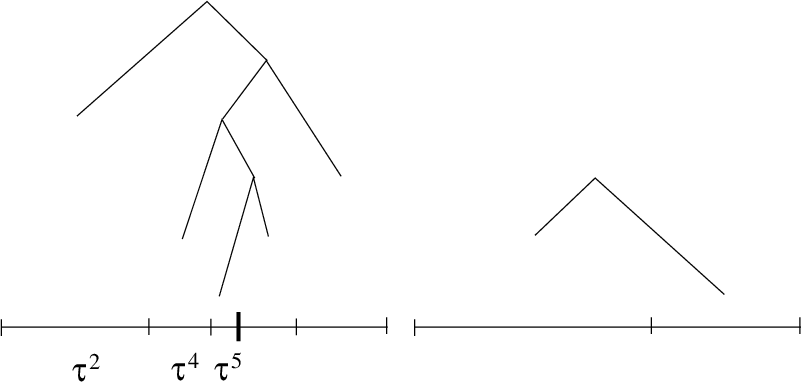}}
\caption{To have the break in $T_1$ we need two consecutive carets of the same type. Here two carets of $x$-type give the break on level 5 according to the equality $\tau=\tau^2+\tau^4+\tau^5$.}
\label{fancyproof2}
\end{figure}

\begin{figure}[tbh]
\centerline{\includegraphics[width=82mm]{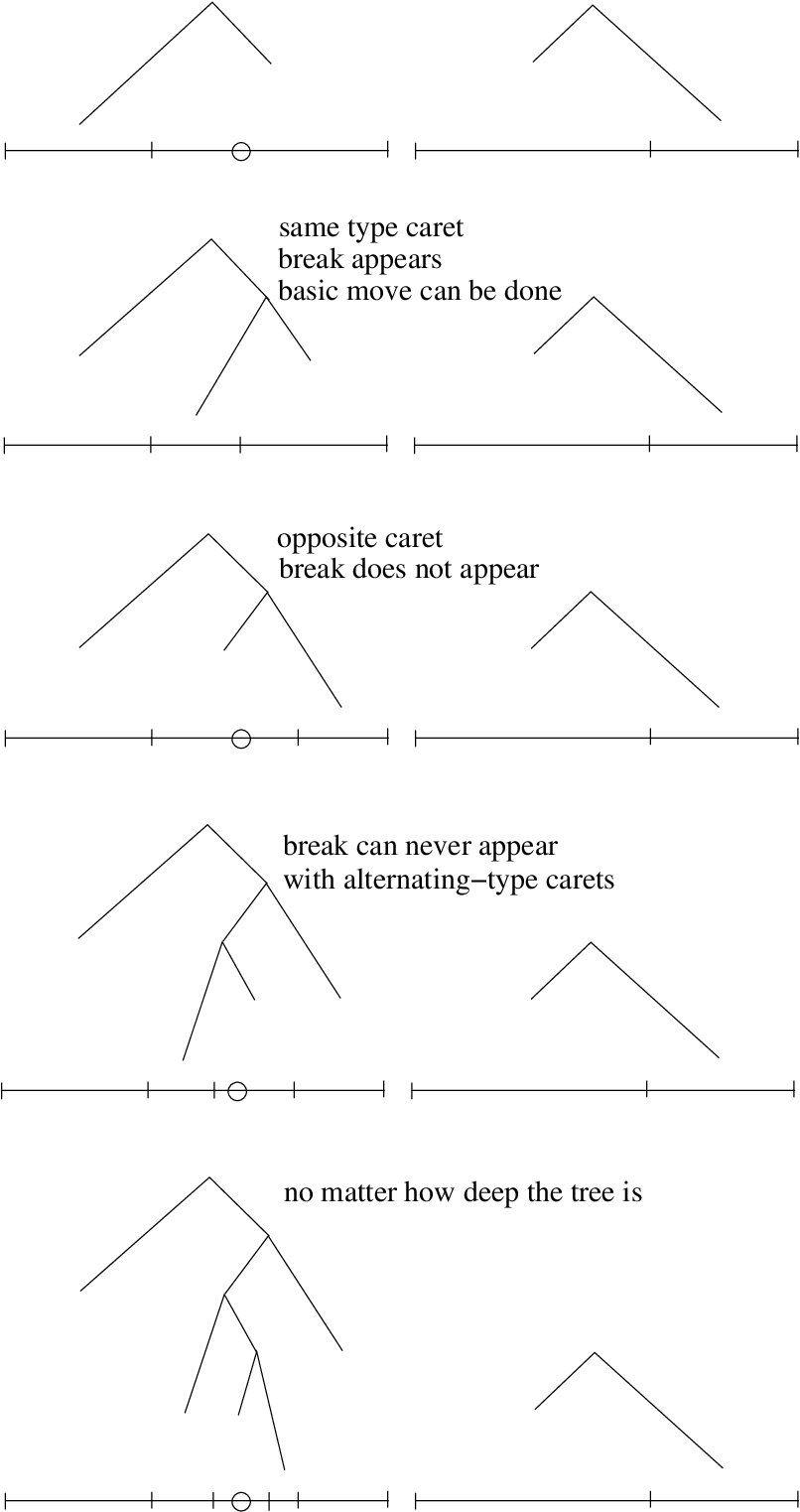}}
\caption{The subsequent subdivisions needed in tree $T_1$ to create the break at $\tau$. If the corresponding interval keeps being subdivided with the type different to the one immediately above, the break is never created. To create it, one needs to have two consecutive carets of the same type (as it happens in the second diagram in the figure. The circle represents the break we need to have because of $T_2$.}
\label{fancyproof}
\end{figure}

If below the short edge of the root caret the subsequent carets on their short edges are of alternating the type, we never reach the value $\tau$ because of the strict inequalities:
$$
\sum_{k=1}^n\tau^{2k}<\tau<1-\sum_{k=1}^n\tau^{2k+1}
$$
We would need infinitely many children to obtain a break at $\tau$ corresponding to the equalities
$$
\sum_{k=1}^\infty\tau^{2k}=\tau=1-\sum_{k=1}^\infty\tau^{2k+1}
$$
Figure \ref{fancyproof} illustrated this fact.

Since the trees are finite, in one of the trees there must be two consecutive carets of the same type, and the root caret can be switched by a sequence of basic moves without adding an extra caret.  Therefore we can keep going down the tree switching types of all the carets of different type, adding nothing, until the two trees are exactly the same.\qed

\section{Multiplication}

As in $F$, multiplication in  $\ft$ is given by composition of maps. To be able to multiply two elements given by tree pairs, we find a common expansion for the target tree of the first element and the source tree of the second element.  Consider two elements given by tree pairs $(T_1,T_2)$ and $(S_1, S_2)$. If it happens that $T_2=S_1$, then the product will be represented by $(T_1,S_2)$. If $T_1\neq T_2$ we apply the following proposition:

\begin{prop} \label{commontree} Given two trees $T$ and $T'$, there exists a common expansion tree $T''$, which represents a common subdivision of the subdivisions of $[0,1]$ given by $T$ and $T'$ respectively.
\end{prop}

{\it Proof.} If the carets are all of the same type, this can be done by just adding   carets to construct the least common expansion in the same way as is done with $F$. If both caret types are present we first need to switch the carets in $T$ to agree with those in $T'.$ This is done by applying Proposition \ref{changecarets}, see  Figure \ref{common} for an illustration. Once this process is finished we might need to add some more carets to obatin the tree $T'',$ which is an expansion of $T$ and has $T'$ as subtree.  \qed

\begin{figure}[tbh]
\centerline{\includegraphics[width=140mm]{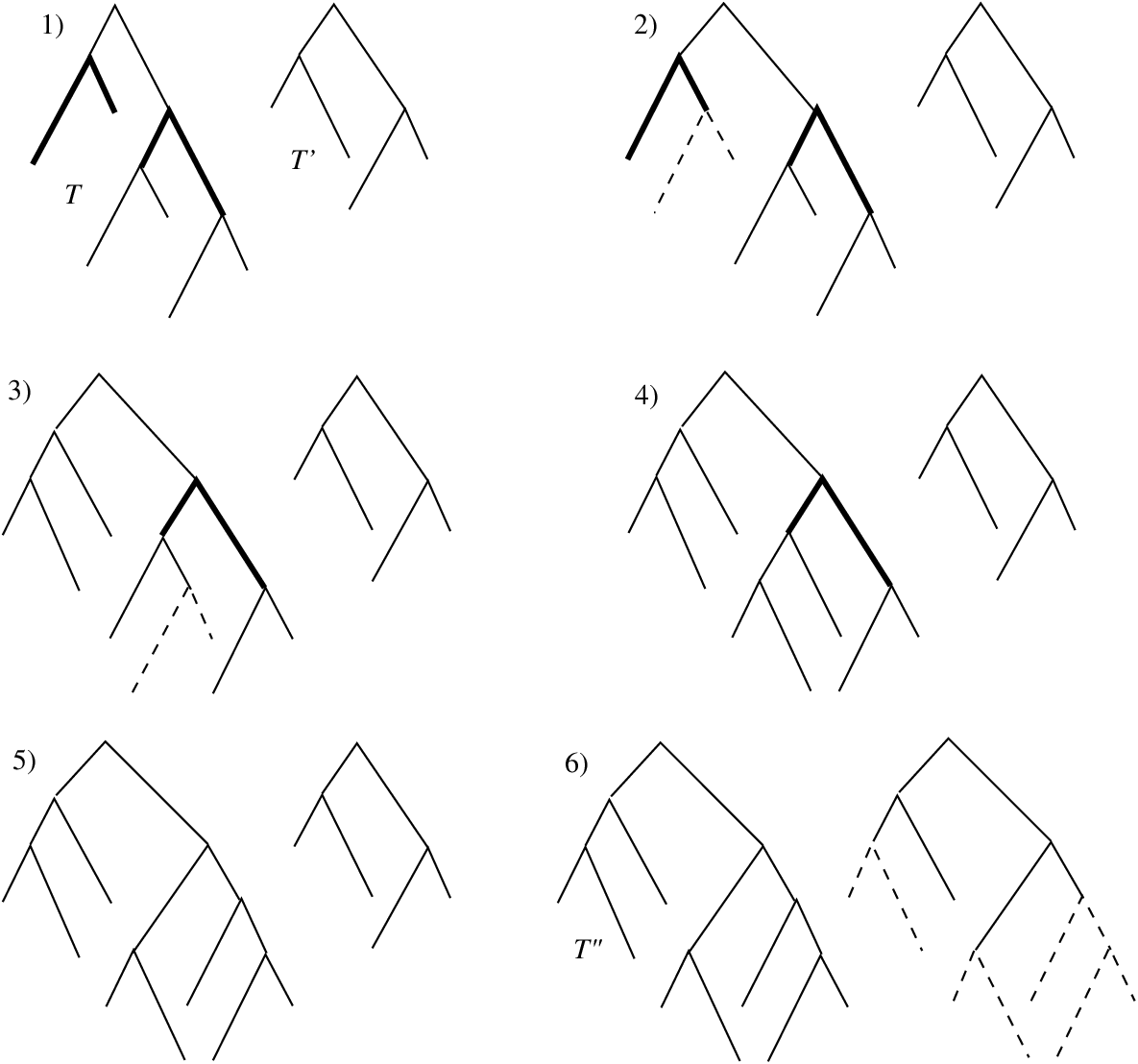}}
\caption{How to find a common subdivision for two trees $T$ and $T'$. Observe that the two carets in thick lines in $T$ are different from the corresponding ones in $T'$, so they will be switched using basic moves. We add a caret (in dashed lines, picture 2) so we can perform a basic move. For the second caret to be switched, a caret (also in dashed lines) needs to be added further below the caret we want to switch (picture 3), and two basic moves are required (pictures 4 and 5). Finally, once the first tree is a subdivision of the second one, we only need to add carets to the latter (tree $T''$, picture 6).}
\label{common}
\end{figure}

This procedure finishes the construction of the algorithm to perform the multiplication of two elements given by two tree pairs $(T_1,T_2)$ and $(S_1, S_2)$. Find the tree $T_3$ which is the common subdivision for $T_2$ and $S_1$, and find two tree pairs which represent the same elements which look like $(T_1',T_3)$ and $(T_3,S_2')$. This is done by adding to $T_1$ the carets corresponding to those we have added to $T_2$, and similarly for the other pair. Finally, the product is given by $(T_1',S_2')$. See Figure \ref{mult} for a simple example.

\begin{figure}[tbh]
\centerline{\includegraphics[width=70mm]{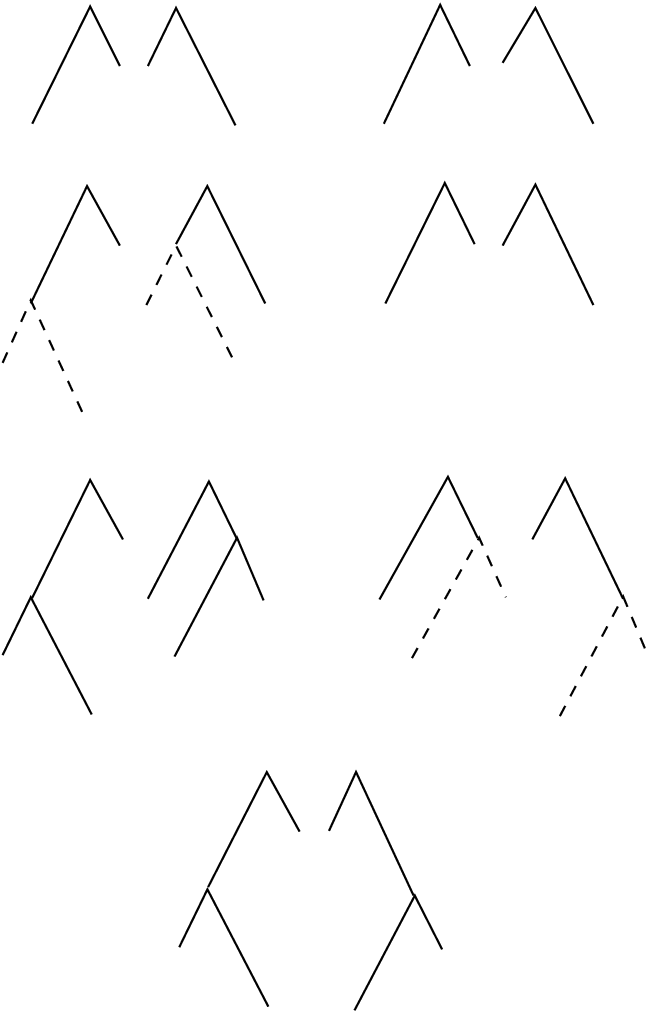}}
\caption{An example of how to multiply two elements when the corresponding carets do not coincide. In dashed lines the carets which are being added to be able to perform the multiplication.}
\label{mult}
\end{figure}

\section{Presentation}\label{prez}

To find generators for $F_\tau$ we follow the ideas used for $F$. The infinite generating set for $F$ has generators, which are given by binary tree pairs $(T,S)$, where $S$ is a tree where each caret has only right children, also called a spine, and where $T$ is a spine with one additional caret at the bottom left hand leaf. Since for $\ft$ we have two different kinds of carets, there is ambiguity in this construction.  However, by Lemma \ref{changecarets}, the type of a caret can always be switched.  Hence we chose one caret type for the trees we will call spines.

\begin{defn} A tree which has only \emph{right-side} carets of $x$-type is called a \emph{spine.}
\end{defn}

\begin{figure}[tbh]
\centerline{\includegraphics[width=30mm]{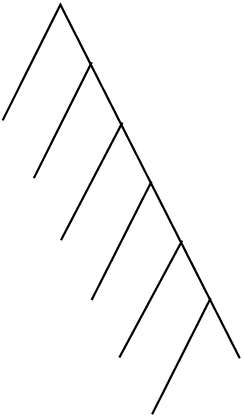}}
\caption{A spine.}
\label{spine}
\end{figure}

Generators for $F_\tau$ will have a spine to which an extra caret is added at the end, as a left child on the source tree, and as a right child on the target one.

\begin{defn} We define the  $x_n$ in $F_\tau$, for $n\geq 0$, by a tree pair diagram $(T_1,T_2),$ where $T_2$ is a spine  with $n+2$ carets, and where $T_1$ is a spine with $n+1$-carets together with in extra $x$-caret on the last left edge. Note that all carets in $x_n$ are $x$-carets, see Figure \ref{gensftau}. Similarly, we define elements  $y_n$ by having the same spine, but with the caret added to the source tree being of $y$-type.
\end{defn}

\begin{figure}[tbh]
\centerline{\includegraphics[width=70mm]{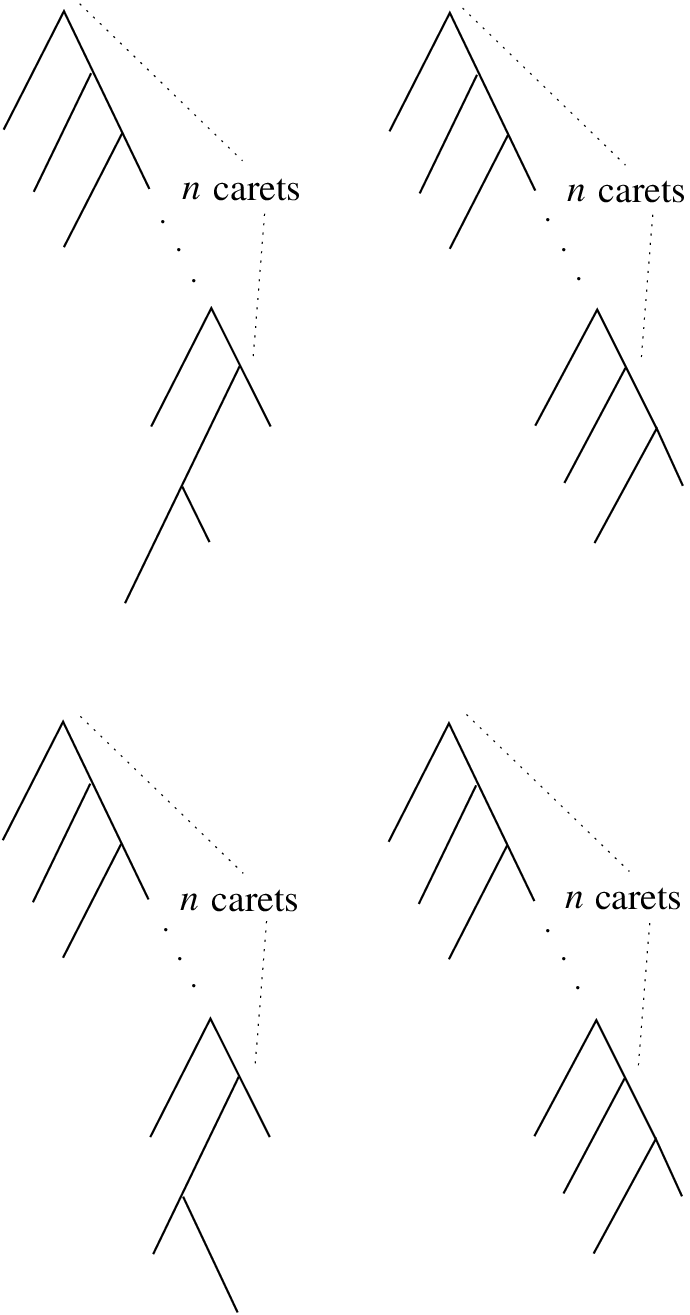}}
\caption{The generators $x_n$ and $y_n$ as tree diagrams.}
\label{gensftau}
\end{figure}

Observe from Figure \ref{gensftau} that the key caret is of type $x$ for the generators $x_n$ and of type $y$ for $y_n$, and this is the reason the carets are named in this way. However, in both cases, the spines have only $x$-carets.

Our goal is to prove that the set of $x_n$ and $y_n$, for $n\geq 0$, is a generating set for $F_\tau$. In a similar fashion to that for $F$, if the target tree of an element is a spine, this element is the product of generators (without taking inverses). In Figure \ref{example} we can see an example of an element which is the product of three generators, obtaining a pair made of a tree and a spine.

\begin{figure}[tbh]
\centerline{\includegraphics[width=80mm]{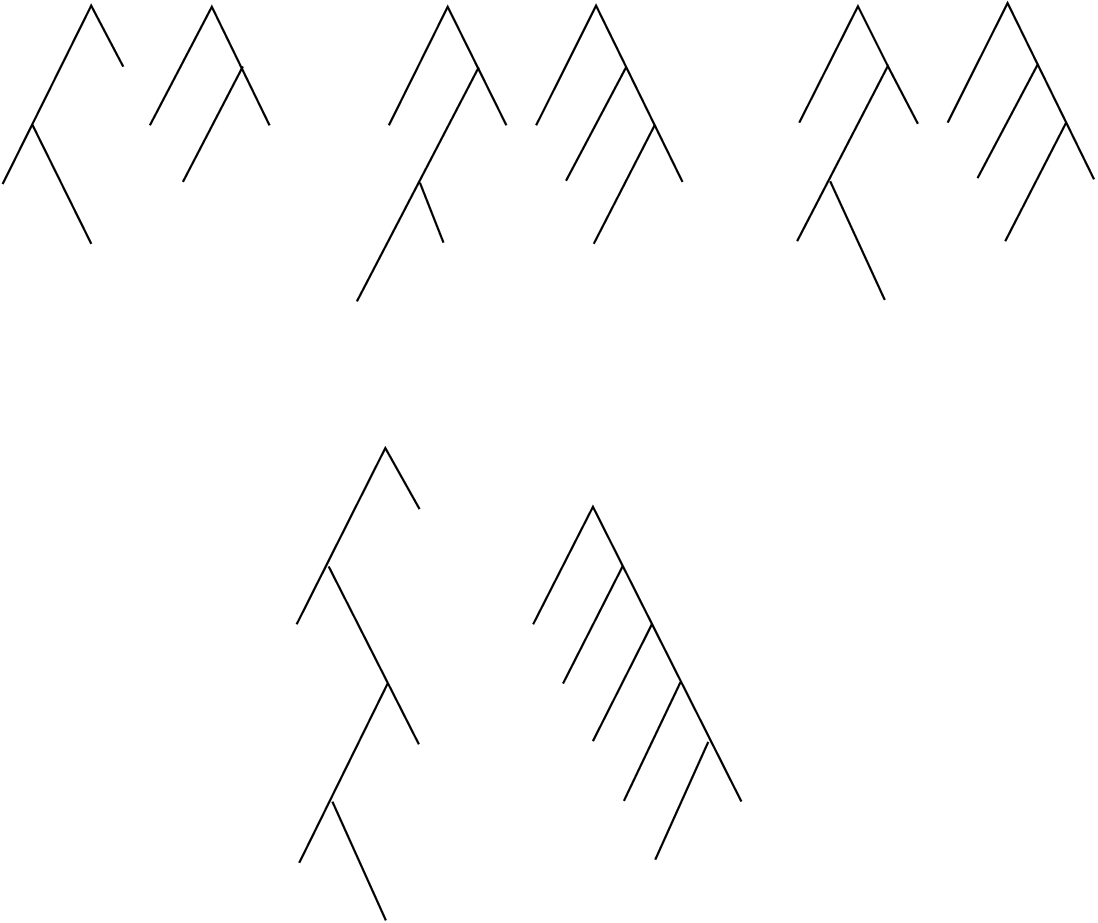}}
\caption{The element $y_0x_1y_1$ constructed as the product of the three generators.}
\label{example}
\end{figure}

In Figure \ref{positive} we see why any element given by a tree and a spine can be written as a product of the generators $x_n$ and $y_n$. If we multiply an element with a spine as a target tree by the generator $x_i$ or $y_i$, then the result is to attach a caret of the corresponding type to the $i$-th leaf. In this way we can construct a tree paired with a spine. Observe though that the tree constructed this way will always have in the right-hand side all carets of the $x$-type.

\begin{figure}[tbh]
\centerline{\includegraphics[width=80mm]{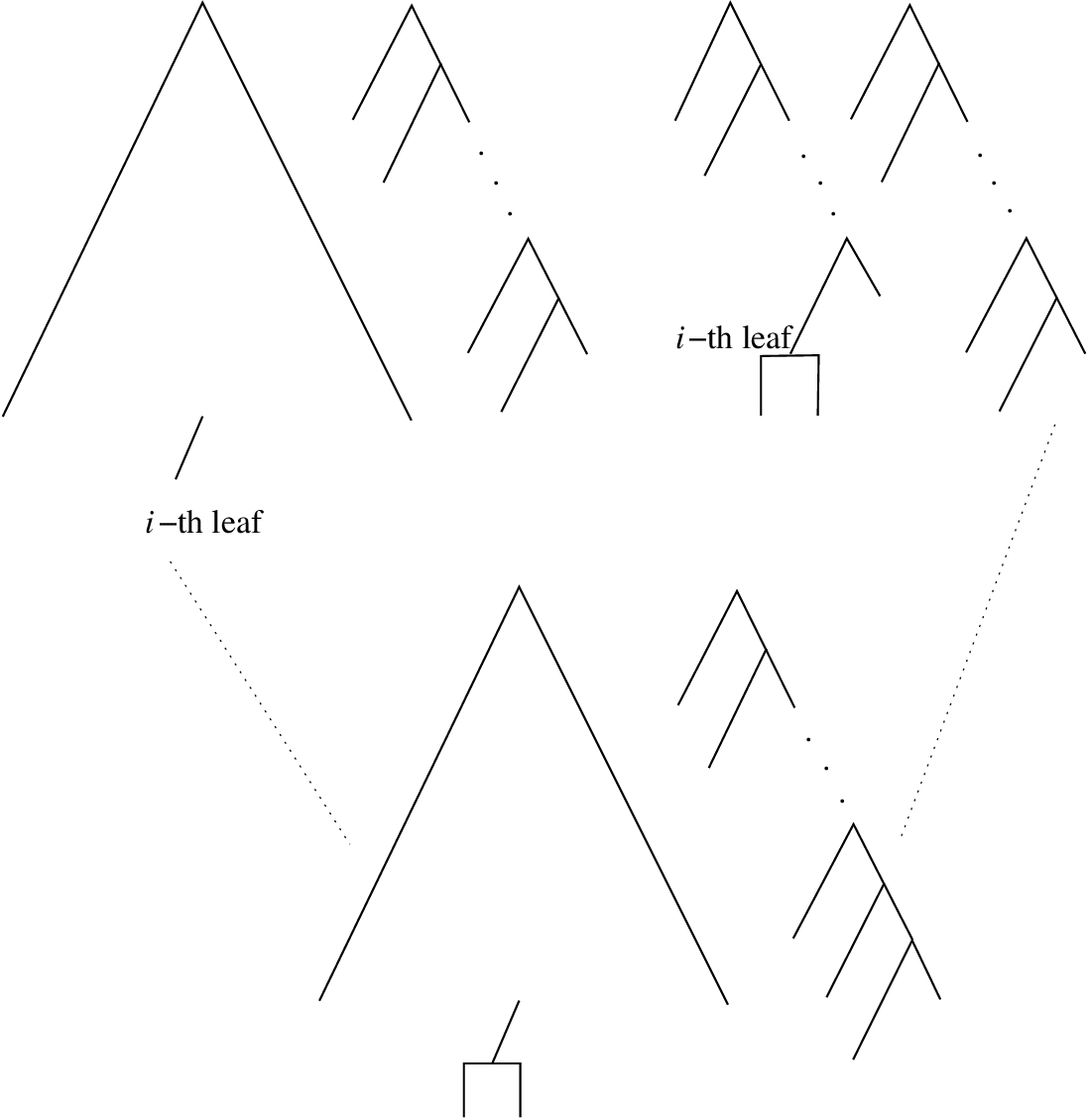}}
\caption{The multiplication of an element with a generator $x_i$ or $y_i$.}
\label{positive}
\end{figure}

Using this construction we can prove:

\begin{prop} The set of elements $x_n$ and $y_n$, for $n\geq 0$, is a set of generators for $F_\tau$.
\end{prop}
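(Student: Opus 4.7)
The strategy is the standard Thompson-group argument: reduce an arbitrary element to a product of two elements each having a spine as target, and then express any such element as a positive word in the generators.

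The main step is to establish the following claim: every element $(T,S)\in F_\tau$, with $S$ a spine, is a positive product of $x_n$'s and $y_n$'s. I would prove this by induction on the number of carets of $T$ lying off the \emph{spine of $T$} (the maximal initial string of right-side $x$-carets at the root of $T$). The base case is when $T$ itself is a spine: then $T$ and $S$ are both spines with the same number of leaves, hence equal, so $(T,S)$ is the identity and is represented by the empty product. For the inductive step, choose a leaf-level off-spine caret $c$ of $T$, that is, an off-spine caret both of whose children are leaves; such a $c$ exists because any off-spine caret has a leaf-level descendant in its subtree. Let $T'$ be $T$ with $c$ removed, write $i$ for the leaf of $T'$ to which $c$ was attached, and let $S'$ be the spine with one fewer leaf than $S$. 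As illustrated in Figure \ref{positive}, the multiplication algorithm of Section 2 gives
$$(T,S) \;=\; (T',S')\cdot g_i,$$
where $g_i=x_i$ if $c$ is an $x$-caret and $g_i=y_i$ if $c$ is a $y$-caret. Since $T'$ has strictly fewer off-spine carets than $T$, the inductive hypothesis applies and $(T,S)$ is a positive product.

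For an arbitrary element $(T_1,T_2)\in F_\tau$, let $n+1$ denote the common number of leaves of $T_1$ and $T_2$ and let $S$ be the spine with $n+1$ leaves. Then
$$(T_1,T_2) \;=\; (T_1,S)\cdot (S,T_2) \;=\; (T_1,S)\cdot (T_2,S)^{-1}.$$
By the claim, both $(T_1,S)$ and $(T_2,S)$ are positive products of the generators, so $(T_1,T_2)$ lies in the subgroup generated by $\{x_n, y_n : n\geq 0\}$.

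The one verification that requires care is the identity $(T,S)=(T',S')\cdot g_i$ in the inductive step: one must trace through the multiplication algorithm of Section 2, finding the common refinement of the spine $S'$ and the source tree of $g_i$ (and lifting this refinement to the source side), and then check that the new caret introduced by $g_i$ is precisely the caret $c$ being reinserted. Modulo that bookkeeping, which is exactly the content of Figure \ref{positive}, the argument is purely combinatorial, and no further obstacle arises.
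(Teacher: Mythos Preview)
Your approach is the same decomposition the paper uses, but you are missing one essential preprocessing step, and without it the inductive identity $(T,S)=(T',S')\cdot g_i$ is actually false in general.

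The problem occurs when the leaf-level off-spine caret $c$ you remove sits on the right side of $T$. Concretely, take $T$ to be a single $x$-caret at the root with a single $y$-caret as its right child, and let $S$ be the $2$-caret spine. In your scheme the spine of $T$ is just the root, the $y$-caret is the unique off-spine leaf-level caret, and removing it gives $T'$ equal to a single $x$-caret, with $i=1$. Since $(T',S')$ is the identity, your formula asserts $(T,S)=y_1$. But these are different elements: compare their slopes at $1$, which are $\tau$ and $\tau^2$ respectively. The reason the bookkeeping fails is that when $i$ equals the index of the rightmost leaf of $T'$, the common refinement of $S'$ with the source tree of $g_i$ forces you to add an extra \emph{$x$-caret} to the right side of $T'$ before attaching the $g$-type caret; so the resulting source tree is not $T$. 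The paper flags exactly this issue just before the Proposition: ``the tree constructed this way will always have in the right-hand side all carets of the $x$-type.''

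The fix, which is the only substantive content of the paper's proof, is to invoke Lemma~\ref{changecarets} \emph{first} to arrange that every right-side caret of $T_1$ and $T_2$ is of $x$-type (at the cost of enlarging the diagrams). Once that is done, every off-spine caret is genuinely off the right side, the index $i$ is at most $k-2$ when $T'$ has $k$ leaves, and your inductive identity then holds exactly as Figure~\ref{positive} shows. With that one additional line, your argument is correct and matches the paper's.
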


{\it Proof.} Take any element of $F_\tau$ given as a pair $(T_1,T_2)$ of trees. Using Lemma \ref{changecarets} we can assume that we have a spine down the right hand side of each tree. If the trees have an $y$-caret in the right-hand side, use the lemma to change the type of these carets, at the price of adding carets to the trees. The result will be a pair of trees whose right-hand sides have only $x$-carets.

Now, put a spine $S$ in between the two trees. The first tree pair $(T_1,S)$  gives an element which, by the construction specified above, is the product of generators $x_n$ or $y_n$. The second pair $(S,T_2)$ is the inverse of an element also of this type. Hence, any element is a product of the generators $x_n$ or $y_n$ and their inverses.\qed

It is not hard to see that there are some relations which are satisfied by these generators. The combinatorics of the carets, similar to those of $F$, give the following four sets of relators:
\begin{enumerate}
\item $x_jx_i=x_ix_{j+1}$
\item $x_jy_i=y_ix_{j+1}$
\item $y_jx_i=x_iy_{j+1}$
\item $y_jy_i=y_iy_{j+1}$
\end{enumerate}
where in all cases we have $i<j$. Another  set of relators is given by the subdivision which admits two expressions as carets. 
These relations are $y_n^2=x_nx_{n+1}$. The goal of the next theorem is to show that these are all relations needed to obtain a presentation for $\ft$.

\begin{thm} \label{presentation} A presentation for $\ft$ is given by the generators $x_i,y_i$, with the relations
\begin{enumerate}
\item $x_jx_i=x_ix_{j+1}$
\item $x_jy_i=y_ix_{j+1}$
\item $y_jx_i=x_iy_{j+1}$
\item $y_jy_i=y_iy_{j+1}$
\item $y_i^2=x_ix_{i+1}$
\end{enumerate}
for $0\leq i<j$.
\end{thm}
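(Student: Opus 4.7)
The plan is to verify that the listed relations suffice to present $F_\tau$ by constructing a normal form for words in the abstractly presented group $G$ and matching it with the tree-pair description of $F_\tau$. The previous proposition already supplies a surjection $\pi\colon G \twoheadrightarrow F_\tau$, so what remains is to check that the relations really hold in $F_\tau$ and then to exhibit a normal form whose images in $F_\tau$ are pairwise distinct. That the relations hold is routine tree manipulation: relations (1)--(4) follow from the disjointness of the carets with indices $i < j$ on a spine, with the shift $j \mapsto j+1$ accounting for the new leaf introduced by the smaller-indexed caret, and relation (5) is exactly the basic move of Section 1 applied at the $i$-th leaf of a spine.

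For the normal form in $G$, the plan is two-stage. First, using only relations (1)--(4), every word is pushed into the form $p \cdot q^{-1}$ with $p$ and $q$ positive words in the $x_i$ and $y_i$, and within each of $p$ and $q$ the indices are sorted non-decreasingly from left to right; this is the classical calculation for Thompson's group $F$, now carried out with two families of letters in parallel. In this form, as illustrated in Figure \ref{positive}, each sorted positive word $p$ corresponds to building a tree-pair diagram $(T_p, S)$ whose target is a spine, obtained by attaching carets of the appropriate types to successive leaves of $S$, and similarly for $q$. Second, relation (5) is invoked to remove the ambiguity that a pair $y_i y_i$ and a pair $x_i x_{i+1}$ describe the same subdivision: one adopts a consistent rewriting rule, say always replacing every adjacent $y_i y_i$ by $x_i x_{i+1}$, producing a unique normal form in each of $p$ and $q$.

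The main obstacle is then uniqueness: showing that two distinct normal forms yield distinct elements of $F_\tau$. The strategy is to exploit the fact that after the $y_i y_i \to x_i x_{i+1}$ normalisation, each positive normal form determines a specific tree in which no two $y$-carets occur consecutively on a left descent from a spine leaf, and such trees are in bijection with sorted positive normal-form words. Supposing $\pi(w) = 1$, the associated tree pair $(T_p, T_q)$ then represents the identity of $F_\tau$, so $T_p$ and $T_q$ reduce to the same tree; running the caret-counting argument (as in the proof of the presentation of $F$) shows that the numbers and types of carets added at each leaf of the spine agree for $p$ and $q$, which forces the normal forms to coincide letter-by-letter and hence $w = 1$ in $G$. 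The extra bookkeeping compared with $F$ is precisely to account for relation (5), ensuring that the canonical choice of tree diagram makes the bijection between normal-form words and elements of $F_\tau$ well defined.
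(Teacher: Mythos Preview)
Your overall strategy—build a normal form in the abstractly presented group $G$ and show distinct normal forms map to distinct elements of $F_\tau$—is a legitimate alternative to the paper's route, but the specific normal form you propose is \emph{not} unique, and this is where the argument breaks.

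Concretely, take $y_0^3$. Applying relation~(5) to the left pair gives $x_0x_1y_0$, which sorts to $x_0y_0x_2$; applying it to the right pair gives $y_0x_0x_1$. Both words are index-sorted and contain no adjacent $y_iy_i$, yet they are different words representing the same element. In tree language: both trees are ``$yy$-free'' in your sense, both encode the subdivision with leaf levels $3,4,3,2$, but they are genuinely different trees (one has an $x$-caret at the root, the other a $y$-caret). So the rewriting $y_iy_i\to x_ix_{i+1}$ terminates but is not confluent, and your claimed bijection between normalized positive words and elements fails. The subsequent ``caret-counting argument as in $F$'' cannot rescue this, because the whole point is that in $F_\tau$, unlike in $F$, a subdivision does \emph{not} determine a unique tree.

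The paper handles exactly this difficulty, but by a different mechanism: rather than seeking a normal form at this stage, it proves Lemma~\ref{basicmoves}, which says that any two trees giving the same subdivision are connected by a chain of basic moves. Since each basic move is an instance of relation~(5), an identity word in $G$ can first have its two trees made literally equal using~(5), after which relations~(1)--(4) finish the job as in $F$. If you want to pursue a normal-form approach instead, you would need something much more restrictive than ``no adjacent $y_iy_i$''; indeed the genuine normal form the paper eventually develops (Section~6) requires $y$-generators only in the positive part with exponent at most~$1$, and even then must exclude a further ``hidden cancellation'' pattern. Establishing uniqueness of that normal form is itself nontrivial and effectively relies on the presentation already being known.
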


{\it Proof:} Given a word in the generators $x_i,y_i$ which gives the identity, when we construct its corresponding tree pair diagram, the two trees necessarily give rise to the same subdivision. Also, the two trees will have a spine (all $x$-carets) in their right hand sides. According to Proposition \ref{basicmoves}, we can go from one to the other by applying basic moves to one of them, and in this case, the basic moves are never performed on a vertex on the right hand side of the tree. Observe that each basic move corresponds exactly to multiplying our word by a conjugate of relation (5), noting that all instances of relation (5) have spines and hence are precisely those that we need. Hence, using relation (5) we can obtain a word which yields a diagram where the two trees are the same. In the  same way as  is done for Thompson's group $F$, this diagram can be seen to be a consequence of relations of the type (1) to (4).
Hence, the original word is a consequence of the relations (1) to (5).\qed

This presentation allows us to establish a correspondence between tree pair diagrams and a particular type of words. This correspondence is completely analogous to that in Thompson's group $F$, based on \emph{leaf exponents}. See \cite[Theorem 2.5]{cfp} or \cite[Section 3.1]{thompt}. Observe that the relations (1)--(4) allow for the ordering of the generators in a word by index, increasing for positive powers and decreasing for negative powers. We have the following result.

\begin{prop}\label{leafexponent} Any element of $F_\tau$ admits an expression of the type
$$
a_{i_1}a_{i_2}\dots a_{i_n}b_{j_m}^{-1}\dots b_{j_2}^{-1}b_{j_1}^{-1}
$$
where
\begin{enumerate}
\item the letters $a$ and $b$ represent either $x$ or $y$,
\item $i_1\leq i_2\leq\ldots\leq i_n$ and $j_1\leq j_2\leq\ldots\leq j_m$.
\end{enumerate}
\end{prop}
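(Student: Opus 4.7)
The plan is to reduce this to two ingredients already available in the paper: the positive decomposition used in the proof that the $x_n, y_n$ generate $F_\tau$, and a bubble-sort normalization that uses only the commutation relations (1)-(4) of Theorem \ref{presentation}.

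First, given any $g \in F_\tau$, the earlier generating argument (together with Lemma \ref{changecarets}) represents $g$ by a tree-pair $(T_1, T_2)$ whose right-hand sides are $x$-spines, and then, inserting a spine $S$ between them, writes $g = u v^{-1}$, with both $u$ (corresponding to $(T_1, S)$) and $v$ (corresponding to $(T_2, S)$) equal to positive products of the generators $x_n, y_n$. So it suffices to show that any such positive word can be rewritten, using only relations (1)-(4), as a positive word whose indices are non-decreasing.

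I prove this normalization by induction on the length $n$ of a positive word $w = a_{k_1} a_{k_2} \cdots a_{k_n}$, where each $a$ is either $x$ or $y$. Let $m = \min_j k_j$ and let $p$ be the smallest $j$ for which $k_j = m$. For every $q < p$ one has the strict inequality $k_q > m$, so whichever of the four relations (1)-(4) matches the letter types of $a_{k_q}$ and $a_m$ applies and gives
\[
a_{k_q} a_m \;=\; a_m a_{k_q + 1}.
\]
Performing this rewrite $p-1$ times in succession moves $a_m$ to the front: $w = a_m w'$, where $w'$ has length $n - 1$ and every one of its indices is at least $m$ (the indices originally at positions $<p$ have each been incremented, so remain strictly greater than $m$, and those at positions $>p$ are $\geq m$ by the minimality of $m$). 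The inductive hypothesis arranges the indices of $w'$ in non-decreasing order, and since all of them are $\geq m$, the full word $a_m w'$ is then non-decreasing as required.

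Applying this procedure separately to $u$ and $v$, and inverting the normalized form of $v$, yields the expression $a_{i_1} \cdots a_{i_n} b_{j_m}^{-1} \cdots b_{j_1}^{-1}$ in the statement. The one point demanding care is the strict inequality $m < k_q$ required to invoke relations (1)-(4); this is exactly what the choice of $p$ as the \emph{leftmost} occurrence of the minimum guarantees. A naive measure such as the number of inversions or the sum of indices is not monotone under these rewrites (indices only grow), which is why the argument is organized as a recursion on word length rather than as an unrestricted bubble sort.
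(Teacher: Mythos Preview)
Your proof is correct and follows essentially the same approach as the paper. The paper does not give a detailed argument for this proposition: it simply observes that relations (1)--(4) ``allow for the ordering of the generators in a word by index'' and notes that the resulting expression corresponds to a tree-pair diagram via leaf exponents, exactly as for $F$. Your write-up fills in the two steps the paper leaves implicit --- the $g=uv^{-1}$ decomposition from the generating-set proof, and the explicit selection-sort using (1)--(4) --- and your remark about why a naive inversion or index-sum count does not directly give termination is a nice touch, though not strictly needed.
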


This is analogous to the normal form for Thompson's group $F$. This expression for an element corresponds to its tree pair diagram using leaf exponents. The only difference between $\ft$ and $F$ is that we can alternate generators $x_i$ and $y_i$ within the same index, as in the example at Figure \ref{example}, where we consider the element $y_0x_1y_1$.

It is not difficult to deduce a finite presentation from the infinite one. From the relations (1)--(4) it is easily seen that the generators with index 2 or higher are conjugates of those with index 1. Hence, the only generators needed are $x_0,x_1,y_0,y_1$. Similarly, for each family of relators (1) to (4), only two are needed
 as  happens in Thompson's group $F$, see, for instance, \cite{cfp}. For the family (5), observe that if $i\geq 2$, the relation $y_i^2=x_ix_{i+1}$ is a conjugate (by the appropriate power of $x_0$) of $y_1^2=x_1x_2$. Hence, the following relations are sufficient:
$$ \arraycolsep=1.5pt
\begin{array}{rclrcl}
x_2x_1&=&x_1x_3& \rule{10mm}{0mm}x_3x_1&=&x_1x_4\\
x_2y_1&=&y_1x_3&x_3y_1&=&y_1x_4\\
y_2x_1&=&x_1y_3&y_3x_1&=&x_1y_4\\
y_2y_1&=&y_1y_3&y_3y_1&=&y_1y_4\\
y_0^2&=&x_0x_1&y_1^2&=&x_1x_2
\end{array}
$$
We do not claim that this presentation is optimal, and it is possible that there is a presentation with fewer generators or with fewer relations.

\section{Abelianisation and the commutator subgroup}

Once we have a presentation, it is easy to abelianise the group. The abelianised group has four generators $\xx,\xxx,\yy,\yyy$, and observe that the relations (1)-(4) abelianise trivially. Hence the quotient abelian group has two relations, namely
$$
2\yy=\xx+\xxx\qquad2\yyy=2\xxx
$$
where we have  changed to additive notation for the abelian group. Using the first relation we can eliminate the generator $\xx$.  Defining   $\bar z=\xxx-\yyy$,  the abelianisation can be   generated by $\xxx,\yy,\bar z$ subject to the relation $2\bar z=0$. The abelianisation is therefore isomorphic to $\zz^2\oplus\zz_2$.

The commutator subgroup, that is the kernel of the abelianisation map, can also be completely understood. Looking at the generators $\xxx$ and $\yy$, we see that they represent the slopes at 0 and at 1, in the same way as holds for Thompson's group $F$. The map from $\ft$ to $\zz^2$ given by the two components of the abelianisation map generated by $\xxx$ and $\yy$ coincides (up to a change of basis in $\zz^2$) with the map that sends every element to the  slopes at 0 and 1.

\begin{defn} We say that an element $f\in\ft$ has \emph{support bounded away from 0 and 1,} or simply \emph{bounded support}, if there exists $\varepsilon>0$ such that $f$ is the identity in the intervals $[0,\varepsilon]$ and $[1-\varepsilon,1]$. We will denote the subgroup of elements with bounded support by $\ft^c$.
\end{defn}

Observe then that the commutator subgroup is contained in $\ft^c$. However, from  the $\zz_2$ component we see that it is not equal to it. To describe it clearly, let $z=x_1y_1^{-1}$, and observe that $z$ maps to  $\bar z$ under the abelianisation map.

\begin{prop} The commutator subgroup of $\ft$ is formed exactly by those elements in $\ft^c$ such that the total exponents in $x_1$ and $y_1$ are both even. Equivalently, they are the elements in $\ft^c$ which have even exponent for $z$, that is, which abelianise to zero on the $\zz_2$ component.
\end{prop}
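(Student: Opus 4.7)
The abelianisation $\phi : \ft \to \ft^{\mathrm{ab}} = \zz^2 \oplus \zz_2$ has just been computed, with the torsion summand generated by $\bar z = \xxx - \yyy$, so $[\ft,\ft] = \ker\phi$. The plan is to factor $\phi$ through the slopes at the endpoints, identify $\ft^c$ as the preimage of the $\zz_2$-summand, and then peel off the remaining torsion invariant using $z$.

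The key tool is the slope homomorphism $\sigma : \ft \to \zz^2$, $\sigma(f) = (\log_\tau f'(0^+),\log_\tau f'(1^-))$, which is a group homomorphism by the chain rule for composition of piecewise linear maps. Since breakpoints of elements of $\ft$ lie in the discrete set $\zz[\tau]\cap[0,1]$, having slope $1$ at an endpoint is equivalent to agreeing with the identity on a one-sided neighbourhood of that endpoint; hence $\ker\sigma = \ft^c$. As $\sigma$ takes values in a torsion-free group it factors through $\phi$, and the induced map $\bar\sigma : \ft^{\mathrm{ab}} \to \zz^2$ must annihilate the $\zz_2$-summand. A short slope computation on $x_0, y_0$ from their tree-pair diagrams in Figure \ref{gensftau} shows $\sigma(x_0),\sigma(y_0)$ are linearly independent in $\zz^2$, so $\bar\sigma$ restricts to an isomorphism on the $\zz^2$-summand of $\ft^{\mathrm{ab}}$. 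Therefore $\ft^c = \phi^{-1}(\zz_2)$, giving $[\ft,\ft]\subseteq \ft^c$ together with an injection $\ft^c/[\ft,\ft]\hookrightarrow \zz_2$.

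It only remains to see that this injection is surjective: this is immediate since, by definition, $\phi(z)=\bar z$ is the generator of the $\zz_2$-summand, so $z\in\ft^c$ and $z\notin[\ft,\ft]$. Thus $\ft^c/[\ft,\ft]\cong\zz_2$ is generated by the coset of $z$, and the commutator subgroup is exactly the set of $f\in\ft^c$ whose $\bar z$-coefficient of $\phi(f)$ vanishes, which is the "even $z$-exponent" condition. For the reformulation in terms of total $x$- and $y$-exponents: the parity of the total $x$-exponent (respectively $y$-exponent) of a word in the generators defines a homomorphism $\ft\to\zz_2$ (one checks each of the five relations preserves both parities, noting relation (5) changes each count by an even number), which in the $(\yy,\xxx,\bar z)$-basis of $\ft^{\mathrm{ab}}$ reads $(\xxx+\bar z)\bmod 2$ (resp.\ $(\yy+\bar z)\bmod 2$); on $\ft^c$ the $\zz^2$-components vanish, so both parities collapse to the $\bar z$-coefficient, and hence "both even" is the same as "even $z$-exponent". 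The main obstacle is identifying $\sigma$ with the projection onto the free summand of $\ft^{\mathrm{ab}}$, which pins down $\ft^c=\phi^{-1}(\zz_2)$ and reduces the remaining condition to a single torsion-invariant check.
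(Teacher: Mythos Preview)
Your argument is correct and follows essentially the same approach as the paper, which simply says ``the proof is elementary by looking at the interpretations of the abelianisation given above'' after having already asserted that the $\zz^2$-part of the abelianisation coincides (up to change of basis) with the slope map at $0$ and $1$. You have filled in precisely the details the paper omits: verifying that $\ker\sigma=\ft^c$, checking via $\sigma(x_0),\sigma(y_0)$ that $\bar\sigma$ is an isomorphism on the free summand, and confirming that $z$ realises the nontrivial torsion class. Your parity-homomorphism treatment of the ``total exponent'' condition is slightly more explicit than the paper's later diagrammatic caret-count, but on $\ft^c$ both collapse to the same $\bar z$-parity, so there is no real divergence.
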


The proof is elementary by looking at the interpretations of the abelianisation given above.

According to Proposition \ref{leafexponent} and the corresponding word-diagram, the extra condition for an element to be in $\ft'$ (the total exponents in $\xxx$ and $\yyy$ in the abelianisation are both even) can be read off the diagram. Recall that a binary tree has left, right and interior carets according to their location in the tree. Left carets are on the left side of the tree, each of them connected to the root by a chain of left children. Right carets are connected to the root by right children, and interior carets are those carets that are neither left or right, see, for instance, \cite{blakegd} or \cite{growth}.

Let a diagram have the trees $T_1$ and $T_2$. We can identify the total exponent for $\xxx$ and $\yyy$ according to the number of carets in the diagram. Define the following numbers, for $i=1,2$:

\begin{itemize}
\item Let $n_i$ be the number of interior $x$-carets in $T_i$.
\item Let $m_i$ be the number of interior $y$-carets in $T_i$.
\item Let $r_i$ be the number of left $x$-carets in $T_i$.
\item Let $s_i$ be the number of left $y$-carets in $T_i$.
\end{itemize}


\begin{prop} When the element given by a diagram $(T_1,T_2)$ is abelianised, the component for $\xxx$ is $n_1-r_1-n_2+r_2$ . Also, the total number of $\yyy$ is $m_1-m_2$.
\end{prop}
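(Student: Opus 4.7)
The plan is to associate to the diagram $(T_1,T_2)$ the word in the generators given by the leaf-exponent correspondence of Proposition \ref{leafexponent}, and then read off the abelianisation by counting letters. First I would apply Lemma \ref{changecarets} if necessary to assume that the right-hand side of both $T_1$ and $T_2$ consists entirely of $x$-carets; this is the natural setting, since every generator $x_n,y_n$ has an $x$-spine on the right and so the leaf-exponent expression only produces diagrams of this kind.

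In this form the element equals $uv^{-1}$, with $u,v$ positive words of non-decreasing indices. The key claim is a caret-to-generator correspondence: when $T_1$ is built from a sufficiently large $x$-spine by appending the letters of $u$ in order, each letter contributes exactly one non-spine caret to $T_1$. Specifically, $x_0$ produces a left $x$-caret, $x_i$ with $i\geq 1$ produces an interior $x$-caret, $y_0$ produces a left $y$-caret, and $y_i$ with $i\geq 1$ produces an interior $y$-caret. Hence the positive exponents of $x_0$, of $x_i$ ($i\geq 1$), of $y_0$, and of $y_i$ ($i\geq 1$) in $u$ equal $r_1,n_1,s_1,m_1$ respectively; the analogous bookkeeping for $v$ yields $r_2,n_2,s_2,m_2$.

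Under the identifications $\bar x_i=\xxx$ and $\bar y_i=\yyy$ for $i\geq 1$ coming from relations (1)--(4), the image of $uv^{-1}$ in the abelianisation is
\[
(r_1-r_2)\xx+(n_1-n_2)\xxx+(s_1-s_2)\yy+(m_1-m_2)\yyy.
\]
Substituting $\xx=2\yy-\xxx$ from relation (5) with $i=0$ to eliminate $\xx$ gives
\[
(n_1-r_1-n_2+r_2)\xxx+\bigl(2(r_1-r_2)+s_1-s_2\bigr)\yy+(m_1-m_2)\yyy,
\]
and reading off the coefficients of $\xxx$ and $\yyy$ proves the formulas.

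The main obstacle is rigorously justifying the caret-to-generator correspondence. One has to check that when multiplying $(T,S)\cdot x_n$ with $S$ already a large enough $x$-spine, no extra common-refinement carets need to be introduced, so that the product is simply $T$ with a single $x$-caret attached at its $n$-th leaf, classified as a left caret when $n=0$ and as an interior caret when $n\geq 1$ (and similarly for $y_n$). The non-decreasing order of indices in the leaf-exponent form of Proposition \ref{leafexponent} is exactly what guarantees that successive attachments occur at leaves further to the right and so do not disturb earlier carets. This mirrors the standard leaf-exponent bookkeeping for Thompson's group $F$ (compare \cite[Theorem 2.5]{cfp}), the only new ingredient being that the two caret types must be tracked separately.
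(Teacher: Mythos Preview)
Your argument is correct and is essentially the same as the paper's own justification (given in the paragraph immediately following the Proposition): the paper likewise identifies the interior $x$-carets with the occurrences of $x_i$, $i\ge 1$, the left $x$-carets with the occurrences of $x_0$, and then uses the substitution $\xx=-\xxx+2\yy$ to produce the $-r_1+r_2$ contribution to the $\xxx$-component. Your write-up is simply more explicit about the caret-to-generator bookkeeping and about the $\yyy$-component, which the paper leaves largely implicit.
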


Furthermore, although we shall have no need for it, the number $s_1-s_2$ gives the component for $\yy$. 

Observe that to obtain the total exponent for $\xxx$ one has to take the $n_i$, which correspond to the generators $x_i, i\geq 1$, but also the $r_i$, which correspond to the generators $x_0$ in the word. This is because in the abelianisation, each $\xx$ is replaced by $-\xxx+2\yy$. Hence, each $\xx$ contributes with a unit to the exponent for $\xxx$. Observe that we will only be interested in parity, so we can discard all  negative signs.

Hence, just for looking at the tree pair diagram we can know if an element is in the commutator subgroup or not.

\begin{thm}\label{commutator-thm} An element given by a diagram $(T_1,T_2)$ is in $\ft'$ if and only if the following conditions are all satisfied:
\begin{enumerate}
\item The level of the leftmost leaf is the same for $T_1$ and $T_2$, that is, $2r_1+s_1=2r_2+s_2$. This corresponds to the fact that the element must be the identity in a neighbourhood of 0.
\item The level of the rightmost leaf is the same for $T_1$ and $T_2$. This corresponds to the fact that the element must be the identity in a neighbourhood of 1.
\item The total exponents for $\xxx$ and $\yyy$ are both even, i.e. $n_1+r_1+n_2+r_2$ and $m_1+m_2$ are both even.
\end{enumerate}
\end{thm}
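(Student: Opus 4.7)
\emph{Plan of proof.} The three conditions are to be identified with the vanishing of the three components of the abelianisation map $\ft \to \zz^2 \oplus \zz/2\zz$.

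For conditions (1) and (2) I would argue geometrically. Going down the leftmost path of $T_i$, the level drops by $2$ at every left $x$-caret (whose long left edge corresponds to a subinterval of length $\tau^{k+2}$) and by $1$ at every left $y$-caret; hence the leftmost leaf of $T_i$ sits at level $2r_i+s_i$, and condition (1) is exactly the statement that the slope of $g$ at $0$ equals $\tau^{0}=1$. An entirely analogous argument on the rightmost paths (with $x$- and $y$-carets exchanging roles) identifies condition (2) with the slope of $g$ at $1$ being $1$. As explained earlier in the section, the pair of slope homomorphisms is the $\zz^2$-valued quotient of the abelianisation whose kernel is precisely $\ft^c$; so (1) and (2) hold simultaneously iff $g\in\ft^c$.

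For condition (3) I would use the preceding Proposition. Let $a$, $b$, $c$ be the total exponents of $\xxx$, $\yy$, $\yyy$ in the abelianised word for $g$, so that the image of $g$ in the abelianisation equals $a\xxx+b\yy+c\yyy$, taken modulo the surviving relation $2\yyy=2\xxx$; by that Proposition we have $a=n_1-r_1-n_2+r_2$ and $c=m_1-m_2$. A direct generator-by-generator check shows that the two slope homomorphisms above are recovered as $-b$ and $a+b+c$ respectively. Assuming (1) and (2), we therefore have $b=0$ and $a+c=0$, so the class of $g$ in the abelianisation equals $a\xxx - a\yyy$, which vanishes in $\zz^3/\langle 2\xxx - 2\yyy\rangle$ exactly when $a$ is even --- equivalently, when $c=-a$ is even. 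This is precisely what condition (3) records; the two parity conditions coincide under (1) and (2).

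The main obstacle I anticipate is bookkeeping: one has to pin down which carets count as left, right, or interior (particularly at the root and along the spine) so that the formulas $2r_i+s_i$ for the leftmost leaf level, together with $a=n_1-r_1-n_2+r_2$ and $c=m_1-m_2$, are consistent with the conventions of the preceding Propositions. Once that is in place, the theorem is simply the accumulated statement of the three component-wise vanishings.
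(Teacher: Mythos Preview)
Your proposal is correct and follows essentially the same route as the paper. In fact the paper does not give a separate proof of this theorem at all: it is stated as the cumulative summary of the preceding paragraphs (the description of the abelianisation, the identification of the $\zz^2$-part with the slope map, and the Proposition computing the $\xxx$- and $\yyy$-exponents from the caret counts). Your argument is a faithful and slightly more explicit unpacking of that discussion; in particular your verification that the slope homomorphisms read off as $-b$ and $a+b+c$, and your observation that under (1) and (2) the two parity conditions in (3) are equivalent (since $a=-c$), make explicit points the paper leaves to the reader. The only caution is the bookkeeping you already flag: make sure your conventions for left/right/interior carets and for the direction of the diagram match those used in the preceding Proposition, so that the formulas $a=n_1-r_1-n_2+r_2$ and $c=m_1-m_2$ are literally the ones being quoted.
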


This interpretation will be useful in the next section.

\section{Simplicity}

The goal of this section is to prove that the commutator subgroup $\ft'$ is a simple group. The proof will follow several steps.

\begin{defn}
Let $a,b\in\zz[\tau]$, with $0<a<b<1$. Then we denote by $\ft[a,b]$ the subgroup of $\ft$ of those elements whose support is included in $[a,b]$. Within $\ft[a,b]$, we denote by $\ft'[a,b]$ its commutator subgroup and also $\ft^c[a,b]$ its subgroup of elements with bounded support (i.e. they are the identity in a neighbourhood of $a$ and in one of $b$). For clarity, observe that the support of an element in $\ft^c[a,b]$ is included in $[a+\varepsilon,b-\varepsilon]$ for some $\varepsilon>0$.
\end{defn}

We have that for any $a,b$ the subgroup $\ft[a,b]$ is isomorphic to $\ft$.

\begin{prop}\label{int} $\ft[a,b]\cong\ft$.
\end{prop}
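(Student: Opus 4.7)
The plan is to exhibit a PL homeomorphism $\psi\colon[a,b]\to[0,1]$ whose breakpoints lie in $\zz[\tau]$ and whose slopes lie in $\langle\tau\rangle$, and to define $\Psi\colon\ft[a,b]\to\ft$ by conjugation, $\Psi(f)=\psi\circ f|_{[a,b]}\circ\psi^{-1}$. The inverse sends $g\in\ft$ to the element of $\ft[a,b]$ that equals $\psi^{-1}\circ g\circ\psi$ on $[a,b]$ and the identity on $[0,a]\cup[b,1]$; continuity at $a$ and $b$ is automatic since $g$ fixes $0$ and $1$. Every slope of $\Psi(f)$ is a product of three powers of $\tau$, and every breakpoint lies in $\zz[\tau]$, because $\psi$ restricts to a bijection $\zz[\tau]\cap[a,b]\leftrightarrow\zz[\tau]\cap[0,1]$: on each linear piece one has $\psi(t)=c+\tau^s(t-d)$ with $c,d\in\zz[\tau]$, and $\tau^{-1}=1+\tau\in\zz[\tau]$ keeps negative powers of $\tau$ in the ring. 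The homomorphism property of $\Psi$ is then direct.

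To construct $\psi$ it suffices to subdivide $[a,b]$ into finitely many consecutive intervals $[c_{i-1},c_i]$ with $c_i\in\zz[\tau]$ and each length $c_i-c_{i-1}$ a positive power of $\tau$. Given such a subdivision with $k$ pieces, match it in order with any tree-generated subdivision of $[0,1]$ having $k$ leaves (for instance, the spine with $k-1$ carets), and let $\psi$ map each piece linearly onto the corresponding one; each resulting slope is then a quotient of two powers of $\tau$, hence in $\langle\tau\rangle$.

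The essential step is the combinatorial claim that every positive $b-a\in\zz[\tau]\cap(0,1)$ admits a finite expansion $b-a=\tau^{n_1}+\cdots+\tau^{n_k}$ with integers $n_i\geq 1$; the partial sums $c_j=a+\sum_{i\leq j}\tau^{n_i}$ then lie in $\zz[\tau]$ automatically. I would prove this greedily: at each stage subtract the largest $\tau^n$ not exceeding the current remainder, and use $\tau^{n-1}=\tau^n+\tau^{n+1}$ to verify that the new remainder lies in $[0,\tau^{n+1})\cap\zz[\tau]$. Termination is the standard finiteness property of $\phi$-expansions for elements of $\zz[\phi^{-1}]=\zz[\tau]$, which holds because $\phi=1/\tau$ is a Pisot unit; alternatively it can be checked directly by tracking the $\zz$-basis coefficients $(a_j,b_j)$ in $x_j=a_j+b_j\tau$ and observing that applying the Galois conjugation $\tau\mapsto-1-\tau$ to a nonterminating greedy expansion would force the conjugate partial sums to be unbounded, contradicting that each conjugate remainder $a_j+b_j(-1-\tau)$ is determined by $x_j\in\zz[\tau]$.

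The main obstacle is justifying this finiteness of the $\tau$-expansion; once it is in hand, the argument closes as a direct analogue of the standard fact $F[a,b]\cong F$ for dyadic $a,b$ in Thompson's group $F$, with powers of $\tau$ playing the role of powers of $1/2$.
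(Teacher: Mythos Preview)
Your overall architecture is sound and is in spirit the same as the paper's: build a PL homeomorphism $\psi\colon[a,b]\to[0,1]$ with $\zz[\tau]$-breakpoints and $\langle\tau\rangle$-slopes, then conjugate. The paper, however, obtains $\psi$ much more cheaply. It first invokes Cleary's transitivity result (\cite[Corollary~1]{clearyirr}) to get an element $f\in\ft$ with $f(\tau^2)=a$ and $f(\tau)=b$, so conjugation by $f$ gives $\ft[a,b]\cong\ft[\tau^2,\tau]$; then a single affine map $t\mapsto\tau^{-3}(t-\tau^2)$ identifies $\ft[\tau^2,\tau]$ with $\ft$, using only that $\tau$ is a unit in $\zz[\tau]$. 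This sidesteps the $\tau$-expansion lemma entirely: the combinatorial work is already encapsulated in Cleary's corollary. Your route is more self-contained but re-proves part of that corollary.

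There is a genuine gap in your termination sketch. The Galois conjugate of $\tau$ is $\bar\tau=-1-\tau$ with $|\bar\tau|=\phi>1$, so passing to conjugates in your recursion $x_{j+1}=x_j-\tau^{n_{j+1}}$ gives $\bar x_{j+1}=\bar x_j-\bar\tau^{\,n_{j+1}}$ with $|\bar\tau^{\,n_{j+1}}|\to\infty$; the conjugate remainders are \emph{not} bounded and no contradiction follows. The correct Pisot argument works at the level of the shift map $T(x)=\{\phi x\}$ on $[0,1)$: here the relevant conjugate is $\bar\phi=-\tau$ with $|\bar\phi|<1$, so $|\overline{T(x)}|\le\tau|\bar x|+1$ stays bounded, whence only finitely many remainders occur and the greedy $\phi$-expansion of any $x\in\zz[\tau]\cap[0,1)$ is eventually periodic. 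One still needs a short extra step to upgrade ``eventually periodic'' to ``finite'' (this is the Frougny--Solomyak property (F) for $\phi$). Your citation of the Pisot-unit finiteness property is correct and suffices; just drop or repair the alternative conjugation argument.
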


{\it Proof.} According to \cite[Corollary 1]{clearyirr}, there exists an element $f\in\ft$ such that $f(\tau^2)=a$ and $f(\tau)=b$. Conjugating by $f$, we see that $\ft[a,b]\cong\ft[\tau^2,\tau]$. To see that $\ft[\tau^2,\tau]$ is isomorphic to $\ft$, we only need to scale the maps by a factor of $\tau^3$, which is the length of $[\tau^2,\tau]$, and observe that $\tau$ is a unit of the ring $\zz[\tau]$. \qed

Since the support of the elements of $\ft[\tau^2,\tau]$ is contained in $[\tau^2,\tau]$, we can represent these by a tree pair diagram given  by the 2-caret spine which appears at the root, and with the rest of the diagram hanging only from the middle leaf of this 2-caret spine, see Figure \ref{tau2tau}.

Now we will look at the commutator subgroup of $\ft[a,b]$.

\begin{lem}\label{ftauab} Let $f\in\ft'$, and let $a,b\in\zz[\tau]$ be that $f$ is  in $\ft^c[a,b]$. Then, $f\in\ft'[a,b]$.
\end{lem}

Observe that from the fact that $f\in\ft'$ we cannot immediately conclude that $f$ is in $\ft'[a,b]$. The extra condition of Theorem \ref{commutator-thm} for $\xxx$ and $\yyy$ refers to the generators of $\ft$ and not to those of $\ft[a,b]$. We need to relate the generators of both groups to be able to establish the result.

{\it Proof.} As in the  proof of Proposition \ref{int}, we can assume that $a=\tau^2$ and $b=\tau$. The element $f$ has bounded support in $\ft[\tau^2,\tau]$, but in order for it to be in $\ft'[\tau^2,\tau]$ it must satisfy the conditions of Theorem \ref{commutator-thm} with respect to the generators of $\ft[\tau^2,\tau]$. Let $\phi:\ft\rightarrow\ft[\tau^2,\tau]$ be the isomorphism described above, that is, the map obtained by hanging trees from the middle leaf of a two-caret spine. Then, $\ft[\tau^2,\tau]$ is generated by $\phi(x_0),\phi(x_1),\phi(y_0),\phi(y_1)$. See Figure \ref{tau2tau} to clarify this situation.

\begin{figure}[tbh]
\centerline{\includegraphics[width=110mm]{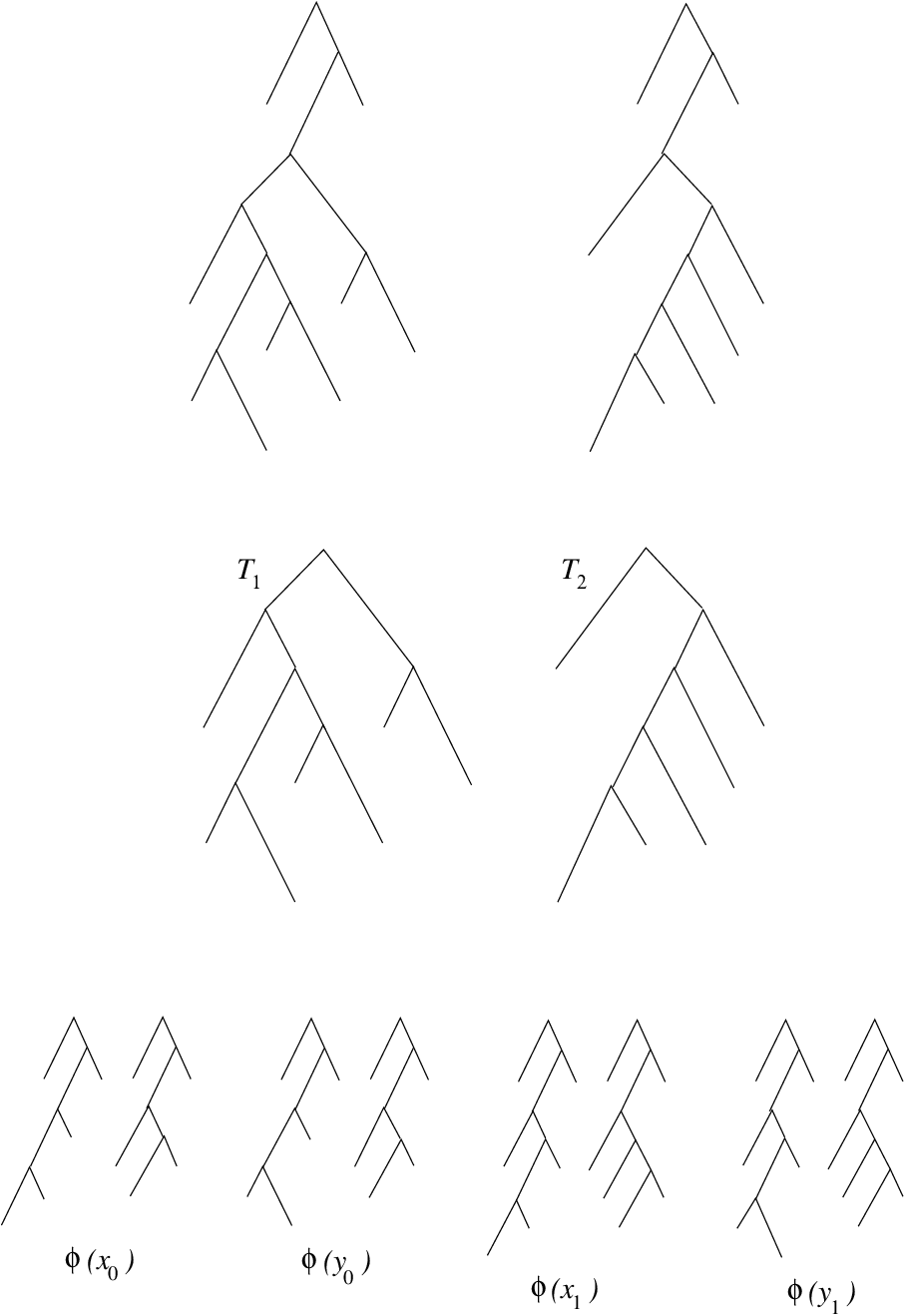}}
\caption{The interpretation of the subgroup $\ft[\tau^2,\tau]$ in terms of diagrams. The top diagram represents an element of $\ft[\tau^2,\tau]$, because the two carets located at the root indicate that the whole support is included in the interval $[\tau^2,\tau]$. The bottom row represents the generators of $\ft[\tau^2,\tau]$, and the top diagram is the image under $\phi$ of the second diagram.}
\label{tau2tau}
\end{figure}

Consider the two trees $(T_1,T_2)$ such that the diagram for $f$ is obtained by attaching $T_1$ and $T_2$ to the middle leaf of a 2-caret spine, as we did above. Let $n_i,m_i,r_i,s_i$ be the numbers of right and interior $x$-type and $y$-type carets as defined in the previous section. We know that $f$ is in $\ft'$ and we want to show that $f\in\ft'[\tau^2,\tau]$. But the diagram for $f$ when considered in $\ft[\tau^2,\tau]$ would be $(T_1,T_2)$, whereas the diagram for $f$ when considered in $\ft$ has the trees $T_1$ and $T_2$ attached to a two-caret spine.

Hence, the number of $\xxx$ and $\yyy$ for $f$ in $\ft$ has to consider all the carets in $T_1$ and $T_2$ as interior carets, since they hang from the middle leaf in a two-caret spine. This means that by being in $\ft'$ we know that the numbers $n_1+r_1+n_2+r_2$ and $m_1+s_1+m_2+s_2$ are even (observe that the right carets in $T_1$ and $T_2$, which are now interior, are the same number in both trees so their number is always even). And to see that $f\in\ft'[\tau^2,\tau]$ we need that the numbers which have to be even are now $n_1+r_1+n_2+r_2$ and $m_1+m_2$. The first of these numbers is the same in both cases, and for $m_1+m_2$ we only need to see that since $f$ is the identity in a neighbourhood of $\tau^2$, we have that $2r_1+s_1=2r_2+s_2$ and then $s_1-s_2$, and hence $s_1+s_2$, is even. So from this and from $m_1+s_1+m_2+s_2$ being even, we conclude that $m_1+m_2$ is even and hence $f\in\ft'[\tau^2,\tau]$.\qed

We can now state and prove the main theorem.

\begin{thm}\label{simple} The group $\ft'$ is simple.
\end{thm}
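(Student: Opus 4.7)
The plan is to mirror the standard proof of simplicity for $[F,F]$ (cf.\ \cite{cfp}), using Lemma~\ref{ftauab} as the bridge between $\ft'$ and the localized subgroups $\ft'[a,b]$. Let $N \trianglelefteq \ft'$ be a nontrivial normal subgroup; the aim is to show $N = \ft'$. First I pick a nontrivial $g \in N$: since $g$ moves some point of $(0,1)$ and $\zz[\tau]$ is dense in $\rr$, I can find $a, b \in \zz[\tau]$ with $0 < a < b < 1$ and $g([a,b]) \cap [a,b] = \emptyset$.

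The core step is a double-commutator localization. For any $h \in \ft'[a,b]$---which by Lemma~\ref{ftauab} is exactly an element of $\ft'$ supported in $[a,b]$---normality gives $k := [g, h] \in N$, and because $h$ and $ghg^{-1}$ have the disjoint supports $[a,b]$ and $g([a,b])$ they commute, so $k$ equals $h^{-1}$ on $[a,b]$, equals $ghg^{-1}$ on $g([a,b])$, and is the identity elsewhere. Taking a second $\sigma \in \ft'[a,b]$, which is trivial on $g([a,b])$, the conjugate $\sigma k \sigma^{-1}$ coincides with $k$ on $g([a,b])$ and equals $\sigma h^{-1} \sigma^{-1}$ on $[a,b]$. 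Hence $k(\sigma k \sigma^{-1})^{-1} \in N$ is trivial on $g([a,b])$ and equals $[h^{-1}, \sigma]$ on $[a,b]$; being supported in $[a,b]$, it coincides with $[h^{-1}, \sigma]$ globally. Varying $h$ and $\sigma$ over $\ft'[a,b]$ yields $[\ft'[a,b], \ft'[a,b]] \subseteq N$.

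To finish I would transport this inclusion to every subinterval $[c,d] \subset (0,1)$ with $c, d \in \zz[\tau]$ via $\ft'$-conjugation, invoking the transitivity result of \cite[Corollary~1]{clearyirr} adjusted near the endpoints of $[0,1]$ so that the moving element lies in $\ft'$. Every element of $\ft'$ has compact support in $(0,1)$ and, by sufficiently refining its tree-pair diagram, decomposes as a product of elements each supported in a small such subinterval; together with perfectness $\ft'' = \ft'$, this gives $\ft' \subseteq N$. Perfectness is itself nontrivial, and my approach would be to verify it from Theorem~\ref{presentation} together with the isomorphism $\ft'[a,b] \cong \ft'$, by expressing each generator of $\ft'$ as a product of commutators of elements of $\ft'$ supported in a chosen subinterval. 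The two principal technical points are the localization calculation---where the specific combination $k(\sigma k \sigma^{-1})^{-1}$ is chosen to cancel precisely the unwanted action on $g([a,b])$ while preserving the desired effect on $[a,b]$---and the verification of perfectness, which relies on the recursive structure $\ft'[a,b] \cong \ft'$ and is the main obstacle to closing the argument.
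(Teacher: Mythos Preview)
Your double-commutator localization is correct and is exactly the content of Higman's theorem (Theorem~\ref{Higman}), which the paper invokes as a black box rather than unpacking. Both routes arrive at the same reduction: applied to $\Gamma=\ft'$, they yield only that $\ft''$ is simple (equivalently, in your language, that $\ft''[a,b]=[\ft'[a,b],\ft'[a,b]]\subseteq N$ for all suitable $[a,b]$), so the entire weight of the theorem rests on perfectness $\ft'=\ft''$. You identify this correctly as the main obstacle.

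Where your proposal is incomplete is in handling the $\zz_2$ obstruction that distinguishes $\ft^c$ from $\ft'$. This shows up twice. First, in your transport step: ``adjusted near the endpoints so that the moving element lies in $\ft'$'' only arranges bounded support, i.e.\ membership in $\ft^c$; it does not force the $\bar z$-parity to be even. Second, your sketch for perfectness (``expressing each generator of $\ft'$ as a product of commutators of elements of $\ft'$'') runs into the same problem: writing $f\in\ft'[a,b]$ as a product of commutators of elements of $\ft[a,b]$ is automatic, but those elements lie only in $\ft^c$, not necessarily in $\ft'$. The paper's Lemma~\ref{firstsecond} resolves both issues with one device: pick a conjugate $z'$ of $z=x_1y_1^{-1}$ whose support is disjoint from everything in sight, and multiply the offending elements by $z'$ to flip their $\bar z$-parity without disturbing any commutator (since $z'$ commutes with everything supported in $[a,b]$). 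Once you have this trick, your argument closes; without it, neither the transport step nor the perfectness step goes through. The decomposition into small subintervals you mention is unnecessary: any $f\in\ft'$ already lies in $\ft'[c,d]$ for some $[c,d]$ by Lemma~\ref{ftauab}, and that is all you need.
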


The proof will occupy the remainder of this section. It will be based on the following theorem due to Higman. Let $\Gamma$ be a group of bijections of some set $E$. For $g\in \Gamma$ define its \emph{moved set} $D(g)$ as the set of points $x\in E$ such that $g(x)\ne x$. This is analogous to the support, but since \emph{a priori} there is no topology on $E$, we do not take the closure.

\begin{thm}[Higman]\label{Higman}
Suppose that for all $\alpha, \beta, \gamma\in \Gamma\setminus \{1_{\Gamma}\}$, there is a $\rho\in \Gamma$ such that
$\gamma(\rho(S))\cap \rho(S)=\varnothing$ where $S=D(\alpha)\cup D(\beta)$.
Then the commutator subgroup $\Gamma'$ is simple.
\end{thm}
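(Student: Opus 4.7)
The plan is to show any nontrivial normal subgroup $N\triangleleft\Gamma'$ must equal $\Gamma'$. Fix $\gamma\in N\setminus\{1_\Gamma\}$; since $\Gamma'$ is generated by the commutators $[\alpha,\beta]$ with $\alpha,\beta\in\Gamma$, it suffices to place every such commutator in $N$ (trivially so if $\alpha$ or $\beta$ equals $1_\Gamma$, so assume both are nontrivial).

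First I apply the hypothesis to the triple $(\alpha,\beta,\gamma)$ to obtain $\rho\in\Gamma$ with $\gamma\rho(S)\cap\rho(S)=\varnothing$, where $S=D(\alpha)\cup D(\beta)$. Setting $\delta:=\rho^{-1}\gamma\rho$ this rewrites as $\delta(S)\cap S=\varnothing$. Thus $\alpha,\beta$ have moved sets in $S$ while $\delta\alpha\delta^{-1}$, $\delta\beta\delta^{-1}$ have moved sets in the disjoint set $\delta(S)$, and two permutations with disjoint moved sets commute. In particular $\beta$ commutes with $\delta\alpha^{-1}\delta^{-1}$.

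This disjointness drives the key algebraic identity
$$
[\alpha,\beta]\;=\;\bigl[\,[\alpha,\delta],\,\beta\,\bigr].
$$
To see it, write $[\alpha,\delta]=\alpha\cdot(\delta\alpha^{-1}\delta^{-1})$ as a product of two commuting factors, and observe the one-line general fact that $[XY,\beta]=[X,\beta]$ whenever $Y$ commutes with $\beta$ (use $Y\beta Y^{-1}=\beta$); applying this with $X=\alpha$ and $Y=\delta\alpha^{-1}\delta^{-1}$ gives the identity. One consequence is immediate: every commutator in $\Gamma$ is a second commutator, so $\Gamma'=\Gamma''$ is perfect. Consequently it suffices to prove $[\alpha,\beta]\in N$ under the stronger assumption $\alpha,\beta\in\Gamma'$.

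Under that assumption the identity finishes the argument provided $\delta\in N$: then $[\alpha,\delta]=\alpha\delta\alpha^{-1}\cdot\delta^{-1}\in N$ (using $\alpha\in\Gamma'$, $\delta\in N$ and $N\triangleleft\Gamma'$), and in turn $\bigl[[\alpha,\delta],\beta\bigr]\in N$ (using $\beta\in\Gamma'$, $[\alpha,\delta]\in N$ and normality again). The chief obstacle is therefore the upgrade $\delta\in N$, i.e.\ the possibility of taking $\rho\in\Gamma'$ in the hypothesis. I anticipate handling this by a second invocation of the hypothesis that produces an auxiliary $\sigma\in\Gamma$ whose moved set is disjoint from $S\cup\gamma\rho(S)$ and whose class in $\Gamma/\Gamma'$ equals that of $\rho^{-1}$; then $\rho\sigma\in\Gamma'$ and $(\rho\sigma)(S)=\rho(\sigma(S))=\rho(S)$, so the geometric separation is preserved while $\delta$ becomes a genuine $\Gamma'$-conjugate of $\gamma$, hence a member of $N$. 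This ``motion at infinity'' is precisely the content the Higman hypothesis supplies, and verifying it is the delicate heart of the proof.
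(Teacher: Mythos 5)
First, note that the paper does not actually prove this theorem: it states it and refers to Higman's article \cite{higmansimple} for the proof, so there is no in-paper argument to compare yours against. Judged on its own, your proposal gets the commutator mechanics right: the reformulation $\delta(S)\cap S=\varnothing$ with $\delta=\rho^{-1}\gamma\rho$, the fact that permutations with disjoint moved sets commute, and the identity $[\alpha,\beta]=\bigl[[\alpha,\delta],\beta\bigr]$ via the observation that $[XY,\beta]=[X,\beta]$ when $Y$ centralises $\beta$ are all correct. One small overstatement: this identity only exhibits $[\alpha,\beta]$ as an element of $[\Gamma',\Gamma]$, not of $\Gamma''$, since $\beta$ is still a general element of $\Gamma$; to conclude $\Gamma'=\Gamma''$ you need a second application of the symmetric identity in the $\beta$ slot (replacing $\beta$ by $[\beta,\delta_1]$ for a second separating element $\delta_1$). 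That is routine to add.

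The genuine gap is the one you flag yourself and then defer: you need $\delta=\rho^{-1}\gamma\rho\in N$, which would follow if $\rho$ could be taken in $\Gamma'$ (or at least in the normaliser of $N$), but the hypothesis only produces $\rho\in\Gamma$ while $N$ is only assumed normal in $\Gamma'$. Your proposed repair --- an auxiliary $\sigma\in\Gamma$ with $D(\sigma)$ disjoint from the relevant sets and with $\sigma\equiv\rho^{-1}$ modulo $\Gamma'$ --- is not supplied by the hypothesis: nothing in it controls which cosets of $\Gamma'$ contain elements with prescribed small moved set, and in general the subgroup of elements whose moved set avoids a given subset need not surject onto $\Gamma/\Gamma'$. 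As written, your argument establishes only that every nontrivial subgroup normalised by all of $\Gamma$ contains $\Gamma'$, which is strictly weaker than simplicity of $\Gamma'$; the passage from ``normal in $\Gamma$'' to ``normal in $\Gamma'$'' is precisely the nontrivial content of Higman's proof. A more promising direction than controlling the abelianisation class of $\rho$ is to conjugate $\alpha$ and $\beta$ by $\rho$ instead of conjugating $\gamma$ by $\rho^{-1}$: since $\Gamma'\triangleleft\Gamma$, the elements $\rho\alpha\rho^{-1}$ and $\rho\beta\rho^{-1}$ remain in $\Gamma'$, so every conjugation used to stay inside $N$ is by an element of $\Gamma'$. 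Some further bookkeeping is still required to remove the resulting outer conjugation by $\rho$, and that is where the real work of the theorem lies; as it stands your proof is incomplete at exactly this point.
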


The proof can be seen in \cite{higmansimple}.

The idea of using this theorem is to take advantage of the high transitivity of Thompson-like groups to see that they easily fulfill the conditions of Higman's theorem. As we have already used before,  \cite[Corollary 1]{clearyirr} implies that  given two closed intervals $A$ and $B$, such that $0,1\notin A$, there exists an element of $\ft$ such that $f(A)\subset B$. Hence, the conditions of Higman's theorem are easily seen to be satisfied. Since $\gamma\neq 1$ it is easy to find an interval $C$ such that $\gamma(C)\cap C=\varnothing$. Also, use transitivity to find $\rho$ to send $S$ inside $C$.

The only thing is that the condition $0,1\notin A$ means that Higman's theorem cannot be applied to $\ft$, because there are many elements whose support is the whole unit interval. We can  apply Higman's theorem to the commutator $\ft'$, because all its elements have bounded support. The conclusion of the application of Higman's theorem is then that the \emph{second} commutator $\ft''$ is simple. The proof of Theorem \ref{simple} will be finished when we prove the following lemma.

\begin{lem}\label{firstsecond} We have that $\ft'=\ft''$.
\end{lem}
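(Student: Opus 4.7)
My plan is to prove the nontrivial inclusion $\ft'\subseteq\ft''$ by first establishing the intermediate claim that $\ft'[a,b]\subseteq\ft''$ for every $a,b\in\zz[\tau]$ with $0<a<b<1$. Once this is in hand, the lemma is immediate: an arbitrary $f\in\ft'$ has bounded support, so we may pick such $a,b$ (using density of $\zz[\tau]$ in $\rr$) with $\mathrm{supp}(f)$ strictly inside $[a,b]$; Lemma \ref{ftauab} then gives $f\in\ft'[a,b]\subseteq\ft''$.

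For the intermediate claim, since $\ft'[a,b]$ is generated by commutators $[u,v]$ with $u,v\in\ft[a,b]$, it suffices to show each such commutator lies in $\ft''$. I will use a simple ``correction'' trick. Choose $c,d\in\zz[\tau]$ with $0<c<a$ and $b<d<1$, and pick auxiliary elements $t_u\in\ft[c,a]$ and $t_v\in\ft[b,d]$ whose $z$-exponents agree modulo $2$ with those of $u$ and $v$, respectively. Setting $u':=u\,t_u$ and $v':=v\,t_v$, both elements then lie in $\ft'$: their supports are strictly inside $(0,1)$ and their $z$-exponents are even by construction. The supports of $t_u$ and $t_v$ are pairwise disjoint and disjoint from $[a,b]$, so $t_u$ commutes with $v$ and with $t_v$, and $t_v$ commutes with $u$; a short rearrangement of $u'v'u'^{-1}v'^{-1}$ then yields $[u',v']=[u,v]$. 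Hence $[u,v]\in[\ft',\ft']=\ft''$.

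The one point that needs justifying is the existence of $t_u$ and $t_v$ with the required $z$-parities, i.e.\ the assertion that each $\ft[c,a]$ (for any $[c,a]\subsetneq(0,1)$ with endpoints in $\zz[\tau]$) surjects onto the $\zz/2$-summand of the abelianisation of $\ft$. This I expect to follow from two observations: (i) the parity map $\epsilon\colon \ft^c\to\zz/2$ factors through the abelianisation of $\ft$ and is therefore conjugation-invariant, so Cleary's transitivity (\cite[Corollary 1]{clearyirr}), which makes any two such intervals $\ft$-conjugate, forces all the subgroups $\ft[c,a]$ to have the same image in $\zz/2$; and (ii) this common image must in fact be all of $\zz/2$, because $\bigcup\ft[c,a]=\ft^c$ and $\ft^c/\ft'\cong\zz/2$ is nontrivial. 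I anticipate that verifying this surjectivity is the only subtle ingredient; once it is in place, the correction trick runs routinely and produces the desired decomposition $[u,v]=[u',v']$ with $u',v'\in\ft'$.
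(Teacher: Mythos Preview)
Your argument is correct and follows the same core idea as the paper: write $f\in\ft'$ as a product of commutators of elements of some $\ft[a,b]$ via Lemma~\ref{ftauab}, then ``correct'' each factor by multiplying with an element of disjoint support so that both entries of every commutator land in $\ft'$, leaving the commutator itself unchanged.

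The execution differs in two minor ways. First, the paper uses a \emph{single} corrector: it takes the explicit element $z=x_1y_1^{-1}$ (which manifestly has odd $\bar z$-exponent), conjugates it by some $g\in\ft$ into one small interval $[u,v]$ disjoint from $[a,b]$, and then uses $[p_i,q_i]=[p_i z',q_i]=[p_i,q_iz']=[p_iz',q_iz']$ to fix the parities of both entries simultaneously. You instead use two correctors $t_u,t_v$ living in two separate flanking intervals $[c,a]$ and $[b,d]$. Both variants work, but the paper's is a bit leaner. Second, and more substantively, the paper avoids your abstract surjectivity argument entirely: since $z'$ is a conjugate of $z$, its image in the abelianisation is automatically $\bar z$, so there is nothing to prove. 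Your conjugation-invariance-plus-exhaustion argument for surjectivity of $\epsilon|_{\ft[c,a]}$ is valid, but it is doing work that the explicit choice of $z$ makes unnecessary.
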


{\it Proof:} Clearly we have that $\ft''\subset\ft'$. For the reverse inclusion, take $f\in\ft'$. Since $f\in\ft^c$, choose $a,b\in\zz[\tau]$ such that $f\in\ft^c[a,b]$, namely, if the support of $f$ is included in the interval $[c,d]$, choose $a,b$ satisfying $0<a<c<d<b<1$. According to Lemma \ref{ftauab}, we have that $f\in\ft'[a,b]$. Hence, we have that $f=[p_1,q_1][p_2,q_2]\ldots[p_k,q_k]$, where $p_i,q_i\in\ft[a,b]\subset\ft^c$. To finish the proof and see that $f\in\ft''$, it would be enough to prove that $p_i,q_i$ are in $\ft'$, but this need not be true. We will modify these elements to get the desired result.

Observe that $p_i,q_i$ have support in $[a,b]$, but we have no information on whether they have an even or odd number of generators $\bar z$ when abelianised. For $p_i,q_i$ to be in $\ft'$ we  need each of them to have an even number of generators $\bar z$.

Observe that the element $z=x_1y_1^{-1}$ has bounded support, that is, it is in $\ft^c$. Choose now a tiny interval $[u,v]$ such that $[a,b]\cap[u,v]=\varnothing$. This means that either $0<u<v<a$ or $b<u<v<1$, either one works. As we have done before, and according to  \cite[Corollary]{clearyirr}, choose an element $g\in\ft$ which maps the support of $z$ inside $[u,v]$. Let $z'$ be the conjugate of $z$ by $g$ in such a way that the support of $z'$ is now inside $[u,v]$. Finally, since $[u,v]$ is disjoint with $[a,b]$, we have that $z'$ commutes with each of the $p_i,q_i$, for all $i=1,\ldots,k$. Therefore, we have that for each $i=1,\ldots,k$,
$$
[p_i,q_i]=[p_iz',q_i]=[p_i,q_iz']=[p_iz',q_iz']
$$
Since $z'$ is a conjugate of $z$, it contributes exactly  one generator $\bar z$ to the abelianisation. Hence, for each $i$, exactly one of these four commutators has both terms with an even number of $\bar z$. For instance, if $p_1$ has an odd number of generators $\bar z$ and $q_1$ has an even number, the commutator we choose to have two elements with even $\bar z$ will be $[p_1z',q_1]$.

By choosing the appropriate commutator for each $i$, we can get all $k$ commutators to have two terms with even number of $\bar z$, and hence we conclude that all terms involved in all commutators are in $\ft'$, and from this, finally, that $f\in\ft''$. \qed

This lemma, together with Higman's theorem applied to $\ft'$, implies that $\ft'$ is simple.

\section{Normal Form}\label{Nform}

In this section we describe a normal form (with uniqueness) for $F_\tau$ that is very similar to that for $F$.
A word over the $x_i$ and $y_i$ will be said to be in \emph{seminormal form} if it has the following form:
$$
x_0^{a_0}y_0^{\epsilon_0}x_1^{a_1}y_1^{\epsilon_1}\dots x_n^{a_n}y_n^{\epsilon_n} x_m^{-b_m}x_{m-1}^{-b_{m-1}}\dots x_1^{-b_1}x_0^{-b_0}
$$
where $a_i,b_i\ge 0$ and $\epsilon_i\in\{0,1\}$. Observe that $y$ generators only appear in the positive part of the word, and that they are only allowed to have exponents zero or one. From the correspondence between diagrams and words on the generators via leaf exponents described in Section 3, the existence of a seminormal form for each element of \ft\ follows from the following result.
\begin{lem}\label{movingcarets} Let $(S_1,S_2)$ be a tree pair with $S_i$ having only $x$-carets down its right spine.
There exists a tree pair  $(T_1,T_2)$ representing the same element of \ft\
 that satisfies the following:
\begin{enumerate}
\item $T_2$ has no $y$-carets,
\item $y$-carets in $T_1$ have no left children.
\end{enumerate}
Moreover, the number of carets in $T_i$ is bounded above by three times the number of carets in $S_i$.
\end{lem}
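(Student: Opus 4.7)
The plan is to transform $(S_1, S_2)$ into $(T_1, T_2)$ in two successive stages, using three operations that either preserve or minimally expand the trees while keeping the element of $\ft$ unchanged. These are: basic moves, which swap a compatible pair of $y$-carets for a pair of $x$-carets (or vice versa) without altering the subdivision or the total caret count; applications of Lemma \ref{changecarets}, which switch a single caret's type at the cost of adding at most one caret; and simultaneous refinements of $T_1$ and $T_2$ at matching leaves, which preserve the element and add one caret to each tree. Two useful observations will guide the argument: the caret added by Lemma \ref{changecarets} has the same type as the caret being switched, and a refinement at a leaf creates a new caret with two fresh leaves below it (so the new caret initially has no caret children).

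\emph{Stage 1 --- Clearing $y$-carets from $T_2$.} I will process the $y$-carets of $S_2$ one at a time, from the deepest upward. For each target $y$-caret $c$ in $S_2$, I apply Lemma \ref{changecarets} to switch $c$ to $x$-type, which adds at most one caret to $S_2$; to preserve the element represented, I simultaneously add the matching caret at the corresponding leaf of $S_1$. Once all original $y$-carets of $S_2$ have been processed, the resulting tree $T_2'$ contains no $y$-carets, and since at most $|S_2|$ carets have been added to each tree, I obtain $|T_1'| \le 2|S_1|$ and $|T_2'| \le 2|S_2|$.

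\emph{Stage 2 --- Removing left-child carets from $y$-carets in $T_1'$.} For each $y$-caret $c \in T_1'$ whose left child is a caret $d$, there are two cases. If $d$ is a $y$-caret, then $(c, d)$ sits in the configuration on which a basic move acts, and a direct basic move turns both carets into $x$-carets without adding any carets. If $d$ is an $x$-caret, I first apply Lemma \ref{changecarets} to switch $d$ to $y$-type; by the type-preservation observation the added caret in $T_1'$ is of $x$-type, and the matching caret added to $T_2'$ is likewise of $x$-type, which preserves condition (1). The subsequent basic move on $(c, d)$ then converts both to $x$-carets. Each such offending $y$-caret is resolved with at most one added caret.

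Combining the two stages will give $|T_i| \le 3|S_i|$ as claimed. The main obstacle will be the careful accounting of how many offending $y$-carets can arise at the start of Stage 2 and how the corrections interact. A particular subtlety is that a Stage 1 refinement in $T_1$ can turn a leaf that was a left child of a pre-existing $y$-caret into a caret, thereby creating an offending $y$-caret in $T_1'$ that was not present in $S_1$. The type-preservation property of Lemma \ref{changecarets}, combined with the bottom-up processing order in Stage 1, is what will keep the total caret growth within the stated factor of three.
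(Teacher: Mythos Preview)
Your plan has a genuine gap in Stage~1. You propose to clear all $y$-carets from $S_2$ by applying Lemma~\ref{changecarets} to each one in turn, but that lemma only guarantees that the \emph{targeted} caret changes type; it says nothing about what happens to the rest of the tree. In fact its procedure is a cascade of basic moves running up a chain of short-side descendants, and each basic move trades a pair of $x$-carets for a pair of $y$-carets or vice versa, while also rearranging the three subtrees involved. A small example makes the problem concrete: take a $y$-caret $c_0$ whose left (short) child is an $x$-caret $c_1$ with two leaves. Lemma~\ref{changecarets} tells you to add an $x$-caret on the right of $c_1$, perform a basic move there (creating two $y$-carets), and then a basic move at $(c_0,c_1)$. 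Tracing this, you find that the final tree contains an $x$-caret at the root whose \emph{left child is a $y$-caret}. So switching $c_0$ produced a new $y$-caret elsewhere, and ``process each $y$-caret once, deepest first'' does not terminate with a $y$-free tree. Your ``type-preservation observation'' is likewise only correct in the trivial case where the short-side child is a leaf; in the general chain case the added caret matches the type of the \emph{bottom} caret in the chain, not the target caret, so the claim that the matching caret added to the other tree is of $x$-type is not justified.

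The paper's argument avoids this by never trying to flip a $y$-caret in place. Instead it \emph{pushes each $y$-caret downwards} along its left edge by a local case analysis on the immediate left child (and, in one case, the grandchild), adding at most one caret per push. Working from the rightmost leaf index inwards guarantees that no $y$-caret already processed is disturbed, and once every $y$-caret has only leaves on its left it can be removed by a single local addition plus a basic move. The same downward-push argument is then run on $T_1$, with one case tweaked so that only $x$-carets are added on the $T_2$ side. The careful counting---which you acknowledged as ``the main obstacle'' but did not carry out---falls out directly from this scheme: at most two carets are ever added per original caret, giving the factor of three. Your Stage~2 idea (basic move when the left child is $y$, otherwise switch its type first) is in the right spirit, but it needs the same downward-push machinery underneath it to actually terminate and to keep the caret count under control.
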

\begin{proof}
As noted previously all elements of \ft\ have tree pairs in which
$T_1$ and $T_2$ have only $x$-carets on their right side.
Starting with such a pair,
we first modify $T_2$ so that all $y$-carets in $T_2$ have no left children.
This is done by working from right to left as follows.
Suppose a $y$-caret is such that it has left children, but that all $y$-carets of higher leaf index do not have left children.
 We want to swap the type of the caret.
If the immediate left child is also a $y$-caret, then we perform a basic move.
Suppose then that we have an $y$-caret whose left child is an $x$-caret. There are three possibilities determined by the right child of the $x$-caret.
These are illustrated in Figure \ref{LS1}. Note that in the third case in the picture, the bottom $y$-caret has a higher leaf index than the top one, so it must have no left child.
In each case, after adding at most one caret, the original $y$-caret can be moved down closer to the leaf.
\begin{figure}[tbh]
\centerline{\includegraphics[width=70mm]{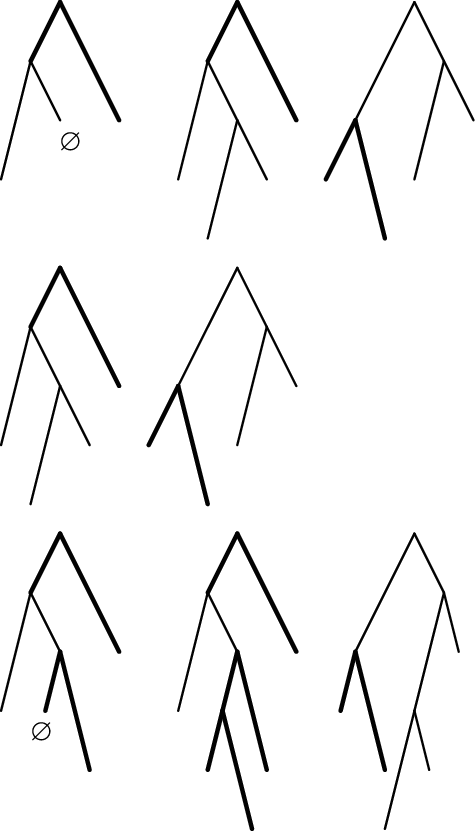}}
\caption{ \label{LS1} Changing the type of a $y$-caret. The $y$-carets are bold. In the first case, the indicated leaf is subdivided by the addition of an $x$-caret and then two basic moves are carried out.
In the second case no subdivision is needed. In the third situation a $y$-caret is added and then a sequence of three basic moves applied. Note that in each case the original (topmost) $y$-caret has been moved down the tree.}
\end{figure}
The new tree $T_2$ now has the property that each $y$-caret has no left child.
Now for each $y$-caret in $T_2$ add another $y$-caret as the left child and perform a basic move. The resulting tree $T_2$ now has no $y$-carets.

Following the same process as above, we can move $y$-carets in $T_1$ down the tree to ensure that $T_1$ satisfies (2).
We need to be careful not to add any $y$-carets to $T_2$. To that end we modify the third case to be that shown in Figure \ref{LS2}, adding two $x$-carets instead of a $y$-caret.
\begin{figure}[tbh]
\centerline{\includegraphics[width=70mm]{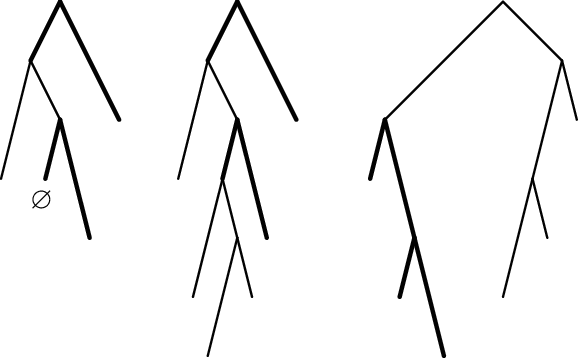}}
\caption{ \label{LS2} Rather than adding a single $y$-caret, two $x$-carets are added and then a sequence of four basic moves results in the final tree. Notice that the topmost $y$-caret has been moved down the tree while preserving the fact that the lower $y$-caret has no left child.}
\end{figure}
Notice that  given a tree pair $(S_1,S_2)$, the above proof produces a tree pair $(T_1,T_2)$ satisfying the conclusion of the  lemma and such that the number of carets
added is at most twice the original number of carets.
\end{proof}

Two different words, each in seminormal form, can represent the same element of \ft. This can happen in two ways. First, we can have a reduction similar to that seen in $F$, where Thompson relators can be applied to reduce the subscripts of many generators in the word. This corresponds to a diagram being nonreduced and the removal of exposed matching carets. The second way this can happen is more subtle and corresponds to an example such as $x_0y_0x_2x_1^{-1}x_0^{-1}=y_0$. Both these words are in seminormal form and both are reduced, but after performing a couple of basic moves, two carets become exposed and they can be cancelled. See Figure \ref{normform}. This will be called a \emph{hidden cancellation.} Fortunately, the only possible hidden cancellations will be exactly of this type. A hidden cancellation shows up every time we have a subword of the form $x_iy_ix_{i+2}ux_{i+1}^{-1}x_i^{-1}$ where $u$ is a word involving generators of index at least $i+3$. If that happens, we have the following sequence of equalities using relators:
$$
x_iy_ix_{i+2}ux_{i+1}^{-1}x_i^{-1}=x_ix_{i+1}y_iux_{i+1}^{-1}x_i^{-1}
=y_i^3ux_{i+1}^{-1}x_i^{-1}
=y_ix_ix_{i+1}ux_{i+1}^{-1}x_i^{-1}
=y_iu'
$$
where $u'$ is the same word as $u$ but with all subscripts lowered by 2.

These two types of reductions are the only possible obstructions for the uniqueness of the seminormal form, as we will show next. We define a normal form as a word which is not allowed to have any of these possible reductions.

\begin{defn}\label{normalform}
A word $w$ is said to be in \emph{normal form} if it is in seminormal form and, in addition, for all $i$ we have:
\begin{enumerate}
\item If $a_i$ and $b_i$ are both nonzero, then at least one of $a_{i+1}, b_{i+1}, \epsilon_{i}, \epsilon_{i+1}$ is nonzero.
\item  If $w$ contains a subword of the form $x_iy_ix_{i+2}ux_{i+1}^{-1}x_i^{-1}$, then $u$ contains a generator with index either $i+1$ or $i+2$.
\end{enumerate}
\end{defn}
As previously noted, these conditions are best understood in terms of tree pair diagrams. The first condition, as for $F$, corresponds to matching exposed carets that can be eliminated. The second condition corresponds to a situation in which two basic moves result in matching exposed carets. This is illustrated in Figure \ref{normform}.
\begin{figure}[tbh]
\centerline{\includegraphics[width=80mm]{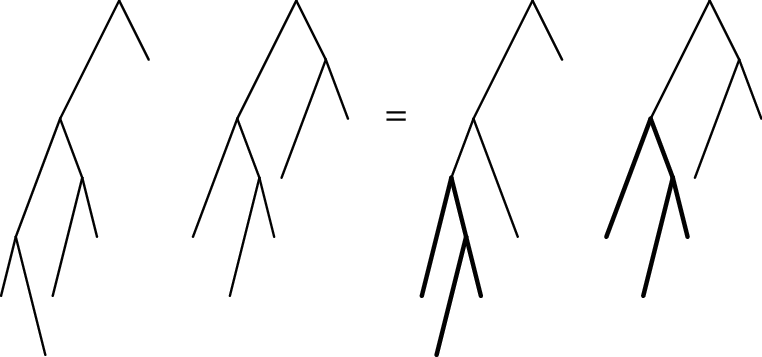}}
\caption{A hidden cancellation. \label{normform} The tree pair on the left is reduced and corresponds to $x_0y_0x_2x_1^{-1}x_0^{-1}$. After performing two basic moves, the two carets in bold can be cancelled. The diagram we obtain is $y_0$. }
\end{figure}

\begin{thm}
Each element of \ft\ has a unique normal form representative.
\end{thm}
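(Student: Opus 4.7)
The plan is to prove existence by a reduction procedure and uniqueness by translating the question into a uniqueness statement about tree-pair diagrams.

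\emph{Existence.} Given $f\in\ft$, take any tree-pair representing it and apply Lemma \ref{movingcarets} to obtain a tree-pair $(T_1,T_2)$ in which $T_2$ has only $x$-carets and every $y$-caret of $T_1$ has no left child. The leaf-exponent correspondence of Proposition \ref{leafexponent} then yields a word in seminormal form. If condition (1) of Definition \ref{normalform} fails, the corresponding exposed matching pair of $x$-carets can be cancelled using the Thompson-type relators; if condition (2) fails, the hidden-cancellation identity $x_iy_ix_{i+2}u x_{i+1}^{-1}x_i^{-1}=y_iu'$ displayed above Definition \ref{normalform} applies. Either reduction strictly decreases the total caret count, so iteration terminates in a normal form.

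\emph{Uniqueness setup.} Via Proposition \ref{leafexponent}, seminormal form words correspond bijectively to tree-pairs $(T_1,T_2)$ that (i) have $T_2$ with only $x$-carets and (ii) have every $y$-caret of $T_1$ with no left child; conditions (1) and (2) of Definition \ref{normalform} translate into the diagrammatic conditions (iii) the pair is reduced, and (iv) no hidden-cancellation pattern is present. Call such a tree-pair \emph{canonical}. Uniqueness of the normal form reduces to showing that each element of $\ft$ admits a unique canonical tree-pair.

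\emph{Uniqueness execution.} Let $(T_1,T_2)$ and $(T_1',T_2')$ be canonical tree-pairs for the same element $f$. The central claim is that each canonical tree-pair is \emph{minimal}, meaning that no breakpoint of $T_2$ corresponds to a point at which $f$ does not actually change slope. Granted this, $T_2$ and $T_2'$ realise the same (essential) subdivision of $[0,1]$, and since an $x$-only tree is uniquely determined by the subdivision it produces, $T_2=T_2'$; consequently the domain subdivisions carried by $T_1$ and $T_1'$ agree, and condition (ii) together with the $x$-only right spine then pin down the caret types uniquely, yielding $T_1=T_1'$. To prove minimality, suppose a redundant breakpoint exists and pick one of minimal depth; this produces carets $c_2\in T_2$ and $c_1\in T_1$ at matching leaf positions whose children are both leaves. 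By (i), $c_2$ is an $x$-caret. If $c_1$ is also an $x$-caret, then $(c_1,c_2)$ is an exposed matching pair, contradicting (iii). If $c_1$ is a $y$-caret, then a local analysis of the carets immediately above $c_1$ and $c_2$, constrained by (i), (ii) and the $x$-only right spine, shows that the surrounding word segment has the form $x_iy_ix_{i+2}u x_{i+1}^{-1}x_i^{-1}$ with $u$ avoiding generators of index $i+1$ or $i+2$, contradicting (iv).

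The main obstacle will be the $y$-caret subcase of the minimality argument: verifying that the only tree configuration above $c_1,c_2$ consistent with canonicity is the one whose leaf-exponent transcription yields precisely the hidden-cancellation subword of Definition \ref{normalform}(2). This requires carefully tracking how the no-left-child rule on $y$-carets and the $x$-only right spine propagate upward in $T_1$, and confirming that no broader family of diagrammatic obstructions needs to be excluded by extending the definition of normal form.
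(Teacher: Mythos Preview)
Your existence argument is correct and matches the paper's. The uniqueness argument, however, rests on a claim that is false.

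You assert that every canonical tree-pair is \emph{minimal}, i.e.\ that the subdivision carried by $T_2$ coincides with the set of essential breakpoints of $f^{-1}$. Consider $f=y_0$. As a piecewise-linear map, $y_0$ has slope $\tau^{-1}$ on $[0,\tau^2]$ and slope $\tau$ on $[\tau^2,1]$, so there is a single interior breakpoint and the minimal range subdivision is $\{[0,\tau],[\tau,1]\}$. This subdivision cannot be realised by an $x$-only tree: the root $x$-caret already forces a break at $\tau^2\neq\tau$. Hence the canonical pair for $y_0$ --- the two-caret pair read off the one-letter normal form ``$y_0$'' itself --- has $T_2$ strictly finer than the essential range subdivision. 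So ``canonical $\Rightarrow$ minimal'' fails at the first nontrivial example, and with it your deduction that $T_2=T_2'$ via equality of essential subdivisions. (Your selection step ``pick a redundant breakpoint of minimal depth; this produces carets whose children are both leaves'' is also problematic on its own terms --- minimal depth places the caret near the root, not at the leaves --- but the counterexample above shows the trouble is structural, not just in the selection.)

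The paper sidesteps this entirely by never invoking diagrammatic minimality. It runs a minimal-counterexample argument on \emph{words}: take two normal-form words $u,v$ for the same element with least total length, write $u\equiv x_0^{a_0}y_0^{\epsilon_0}u_1x_0^{-b_0}$ and $v\equiv x_0^{c_0}y_0^{\zeta_0}v_1x_0^{-d_0}$, and compare slopes at $0$ to obtain $a_0-b_0=c_0-d_0$ and $\epsilon_0=\zeta_0$. Minimality forces (up to symmetry) $c_0=d_0=0$ and $a_0=b_0$, and one then splits on $\epsilon_0$. When $\epsilon_0=0$, conjugating by $x_0^{a_0}$ shows $u$ violates condition~(1); when $\epsilon_0=1$, the relation $y_0^2=x_0x_1$ is used to rewrite $x_0^{-1}y_0v_1x_0$ and show that $u$ must contain the forbidden subword of condition~(2). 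It is precisely this word-level computation that certifies the hidden-cancellation pattern as the \emph{only} extra obstruction --- the point you correctly flagged as the main obstacle but did not resolve.
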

\begin{proof}
 Familiarity with the proof of uniqueness of the normal form for $F$ (as shown, for instance, in \cite{bg}) will be of great help understanding this proof.

That each element of \ft\ has a representative word in normal form is straightforward. The first four relations as listed in Theorem \ref{presentation} can be used, as with $F$, to have the indices in increasing order in the positive part and decreasing in the negative one. Then use Lemma \ref{movingcarets} to transform this word into seminormal form. If this word then fails either of the conditions in the definition of the normal form, then there is a strictly shorter representative in seminormal form. Keep reducing the word until both conditions are satisfied. Equivalently, keep reducing the diagram both for exposed matching carets and for hidden cancellations.

For uniqueness, consider two normal form words $u$ and $v$  that represent the same element of \ft\, and have minimum total length among all such pairs in the whole group. Let the words be given by
$$
u\equiv x_0^{a_0}y_0^{\epsilon_0}u_1x_0^{-b_0}
\qquad
v\equiv x_0^{c_0}y_0^{\zeta_0}v_1x_0^{-d_0}
$$
where $a_0,b_0,c_0,d_0\ge 0$, $\epsilon_0,\zeta_0\in\{0,1\}$ and $u_1$ and $v_1$ are normal form words in which all subscripts are at least 1. The symbol $\equiv$ is being used to denote equality as words.
We will assume that not all $a_0,b_0,c_0,d_0,\epsilon_0,\zeta_0$ are zero. If this were not the case then the following argument can be readily modified by moving to the least subscript for which this is true, but we keep the case of zero for simplicity and clarity.
%

Equating the slopes at 0 for the piecewise linear maps determined by $u$ and $v$ we obtain
$2a_0+\epsilon_0-2b_0=2c_0+\zeta_0-2d_0$, from which it follows that
\begin{equation}\label{slope}
a_0-b_0=c_0-d_0\qquad\text{and}\qquad\epsilon_0=\zeta_0. \tag{$\ast$}
\end{equation}
Since $u$ and $v$ were chosen to minimise the total length we have that one of $a_0$ and $c_0$ must be zero, or else an $x_0$ could be cancelled to obtain shorter words. Similarly, one of $b_0$ and $d_0$ must be zero. We can assume that $c_0=0$. It then follows from (\ref{slope}) that $d_0=0$ and $a_0=b_0\neq 0$. We deal separately with the two possible cases: $\epsilon_0=0$ and $\epsilon_0=1$, which will correspond to conditions (1) and (2) of the normal form, respectively.

In the case in which $\epsilon_0=0$ we move the generators $x_0$ from $u$ to $v$ so we have $u_1=x_0^{-a_0}v_1x_0^{a_0}=v_2$ where $v_2$ is the word obtained from $v_1$ after increasing all subscripts by $a_0$. The word $v_2$ is in normal form and all subscripts appearing in it are 2 or more. Since $u_1=v_2$, both words are in normal form and the total length is strictly less than that for the original pair, we conclude that $u_1\equiv v_2$. But then the original word $u\equiv x_0^{a_0}v_2x_0^{-a_0}$ would have violated condition (1) in Definition \ref{normalform}.

Suppose now that $\epsilon_0=1$. Our words are now $u\equiv x_0^{a_0}y_0u_1x_0^{-a_0}$ and $v\equiv y_0v_1$. We move one generator $x_0$ from each side of $u$ to $v$, so we have
\begin{align*}
x_0^{a_0-1}y_0u_1x_0^{-(a_0-1)}&=x_0^{-1}y_0v_1x_0
=x_0^{-1}y_0x_0v_2
=x_0^{-1}y_0x_0x_1x_1^{-1}v_2\\
&=x_0^{-1}y_0^3x_1^{-1}v_2=x_1y_0v_3x_1^{-1}
=y_0x_2v_3x_1^{-1}
\end{align*}
where $v_2$ is the word obtained by increasing all subscripts in $v_1$ by 1 (by moving the $x_0$ across it), and then subsequently $v_3$ also obtained from $v_2$ increasing the subscripts while moving $x_1^{-1}$ across. Notice that all indices for $v_3$ are at least 3 and hence the final word is still in normal form.  Observe too that the total length for these two words is exactly the same as the original pair, because we have added two generators and later eliminated two. Now repeating the above for all pairs of $x_0$ until they are exhausted, we obtain
$$
y_0u_1=y_0x_2v'x_1^{-1}
$$
where $v'$ is a normal form word in which all subscripts are at least 3, and the length is still the same as the original one. After cancelling the $y_0$, and since $u_1$ and $x_2v'x_1^{-1}$ are in normal form, we conclude, by the minimality of the original pair, that
$u_1\equiv x_2v'x_1^{-1}$. But then the original word
$u\equiv x_0^{a_0}y_0x_2v'x_1^{-1}x_0^{-a_0}$ would not have satisfied part (2) of the definition of the normal form, having a forbidden subword with all subscripts for $v'$ being at least 3.
\end{proof}

\section{Metric properties}

Once we have a unique normal form for the elements of \ft, we can compute some estimates for the word metric of elements, based on the normal form and the unique reduced diagram that relates to it. The idea and the procedures are very similar to those for $F$, see \cite{burillo} and \cite{bcs}.

Given an element $g\in\ft$, take its normal form
$$
g=x_0^{a_0}y_0^{\epsilon_0}x_1^{a_1}y_1^{\epsilon_1}\dots x_n^{a_n}y_n^{\epsilon_n} x_m^{-b_m}x_{m-1}^{-b_{m-1}}\dots x_1^{-b_1}x_0^{-b_0}
$$
where both $a_n+\epsilon_n$ and $b_m$ are nonzero (i.e. we have a positive generator of index $n$ and a negative one of index $m$), and there are no cancellations between $x_n$ and $x_m^{-1}$ (i.e. either $\epsilon_n=1$ or else $n\neq m$).
\begin{defn}
We define the number
$$
D(g)=a_0+a_1+\dots+a_m + \epsilon_0+\epsilon_1+\dots+\epsilon_n+ b_0+b_1+\dots+b_m + n + m
$$
and we denote by $N(g)$ the number of carets of either tree of the unique diagram which corresponds to the normal form, that is, a reduced diagram with no hidden cancellations.
\end{defn}

These two quantities are good estimates for the word metric.

\begin{thm} There exists a constant $C>0$ such that we have
$$
\dfrac{D(g)}C\le\| g\|\le C\,D(g)\qquad\text{ and }
\qquad\dfrac{N(g)}C\le\|g\|\le C\,N(g)
$$
where $\|g\|$ represents the word metric with respect to the generating set $x_0,x_1,y_0,y_1$.
\end{thm}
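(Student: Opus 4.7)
The strategy parallels the classical word-metric estimate for Thompson's group $F$ in \cite{burillo,bcs}. First, $D(g)$ and $N(g)$ are linearly comparable: in the unique reduced, hidden-cancellation-free tree pair representing $g$, each exponent $a_i$ (resp.\ $b_i$) contributes $a_i$ (resp.\ $b_i$) carets hung on the right spine at leaf $i$ of $T_1$ (resp.\ $T_2$), each $\epsilon_i=1$ adds a single $y$-caret, and the indices $n,m$ control the lengths of the right spines; conversely every caret can be charged to one of these counts. Hence there is a constant $C_0$ with $D(g)/C_0\le N(g)\le C_0 D(g)$, so it suffices to prove $D(g)/C\le\|g\|\le C\,D(g)$.

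For the upper bound I would iterate relations (1) and (3) of Theorem \ref{presentation} with the index $0$ to obtain, for $i\ge 1$,
\[
x_i=x_0^{-(i-1)}x_1x_0^{i-1},\qquad y_i=x_0^{-(i-1)}y_1x_0^{i-1}.
\]
Substituting these into the normal form, the internal powers of $x_0$ telescope: consecutive factors at the same level $i$ share their surrounding $x_0^{\pm(i-1)}$, and jumping from level $i$ to level $i+1$ inserts only a single $x_0^{-1}$. Collecting terms (and doing the analogous rewrite on the negative suffix) shows that $g$ has an expression over $\{x_0,x_1,y_0,y_1\}$ of length at most a fixed multiple of $\sum_i a_i+\sum_i b_i+\sum_i\epsilon_i+n+m=D(g)$.

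The lower bound is the main obstacle. Following \cite{burillo}, I would establish the key estimate that for every generator $s\in\{x_0^{\pm 1},x_1^{\pm 1},y_0^{\pm 1},y_1^{\pm 1}\}$ one has $|D(gs)-D(g)|\le K$ for an absolute constant $K$; induction on $\|g\|$ then gives $D(g)\le K\|g\|$. To verify the estimate I would start from the normal form of $g$, right-multiply by $s$, and renormalise using Lemma \ref{movingcarets} together with the reductions built into Definition \ref{normalform}. The extra subtlety compared with $F$ is the presence of $y$-carets: right multiplication by $y_0^{\pm 1}$ or $y_1^{\pm 1}$ can introduce a $y_j$ that has to be pushed past later positive generators using (2) or (4), and the relation $y_i^2=x_ix_{i+1}$ may intervene whenever two $y$-generators at the same index collide, possibly triggering the hidden-cancellation reduction of Definition \ref{normalform}(2). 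In each of the eight cases a direct but finite analysis shows that only a bounded number of exponents $a_i,b_i,\epsilon_i$ near index $0$ or $1$ are altered and the indices $n,m$ shift by at most one, so $D$ changes by at most a universal constant. The $N$-inequalities then follow immediately from the comparison of $D$ and $N$ established at the start.
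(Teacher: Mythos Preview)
Your comparison $D\sim N$ and the upper bound $\|g\|\le C\,D(g)$ are exactly as in the paper: the telescoping rewrite via $x_i=x_0^{-(i-1)}x_1x_0^{i-1}$ and $y_i=x_0^{-(i-1)}y_1x_0^{i-1}$ is what the authors do, and they likewise obtain $D(g)\le 4N(g)$ from the obvious caret inequalities.

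The lower bound is where you diverge. You propose the Burillo-style argument of showing $|D(gs)-D(g)|\le K$ for each of the eight generators $s$. The paper instead proves the lower bound through $N(g)$ rather than $D(g)$: starting from a geodesic word of length $L=\|g\|$, multiplying successively by generators adds at most four carets per step (three from the generator's own diagram, plus at most one auxiliary caret needed to switch a caret type on the spine for the multiplication), yielding a diagram with at most $4L$ carets. Then Lemma~\ref{movingcarets} is invoked once, globally, to pass to seminormal form at the cost of tripling the caret count, and reductions (including hidden cancellations) only decrease it; hence $N(g)\le 12\,\|g\|$.

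The paper's route is cleaner here precisely because it avoids the case analysis you outline. Your approach should work, but the step ``a direct but finite analysis shows that only a bounded number of exponents \dots\ are altered'' hides real work: right-multiplying a normal form by $y_0^{\pm1}$ or $y_1^{\pm1}$ and then restoring seminormal form may involve the relation $y_i^2=x_ix_{i+1}$, reshuffling via Lemma~\ref{movingcarets}, and possibly a hidden-cancellation reduction, and you have to argue that the cascade terminates with a uniformly bounded effect on $D$. That is believable but not obvious from what you wrote. By contrast, the paper's argument replaces this per-generator bookkeeping on $D$ by a single uniform caret bound plus one global application of Lemma~\ref{movingcarets}, which is both shorter and yields explicit constants ($D(g)/48\le N(g)/12\le\|g\|\le 2D(g)\le 8N(g)$).
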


{\it Proof.} Since each $x$ or $y$ generator is represented by a caret, we have the obvious inequalities:
\begin{align*}
N(g)&\ge a_0+a_1+\cdots+a_n+\epsilon_0+\epsilon_1+\cdots+\epsilon_n\\
N(g)&\ge b_0+b_1+\cdots+b_m\\
N(g)&\ge n\\
 N(g)&\ge m
\end{align*}
which yield the inequality $D(g)\leq 4\,N(g)$.

For the upper bounds, take each generator $x_i$ and $y_i$ with $i\geq 2$ and rewrite it in terms of  $x_0,x_1,y_0,y_1$ to obtain the desired bound. The positive part of the word can be written as
$$
x_0^{a_0}y_0^{\epsilon_0}x_1^{a_1}y_1^{\epsilon_1}x_0^{-1}x_1^{a_2}y_1^{\epsilon_2}x_0^{-1}\dots x_0^{-1}x_1^{a_n}y_1^{\epsilon_n}x_0^{n-1}
$$
because observe that a sequence $\ldots x_i^{a_i}y_i^{\epsilon_i}x_{i+1}^{a_{i+1}}\ldots$ will have a large number of generators $x_0$ cancelled in between:
$$
\begin{aligned}
\ldots x_i^{a_i}y_i^{\epsilon_i}x_{i+1}^{a_{i+1}}\ldots&=\ldots(x_0^{-i+1}x_1^{a_i}x_0^{i-1})
(x_0^{-i+1}y_1^{\epsilon_i}x_0^{i-1})(x_0^{-i}x_1^{a_{i+1}}x_0^{a_i})\ldots\\&=
\ldots x_0^{-1}x_1^{a_i}y_1^{\epsilon_i}x_0^{-1}x_1^{a_{i+1}}\ldots
\end{aligned}
$$
and hence for the word we only have one generator $x_0^{-1}$ every time the index grows by 1. Similarly, we do the same for the negative part. Clearly then, we have that the length of this word in $x_0,x_1,y_0,y_1$ is, for instance, at most $2\,D(g)$.

For the lower bound, we use the number of carets. Start with a shortest word for an element $g$, with length $L=\|g\|$. When multiplying by a generator, observe that since a generator has at most three carets, the number of carets of the diagram can increase by at most three carets, plus possibly added carets needed to perform the multiplication. But a generator has only one caret which is not on the spine, and since the spine has only $x$-carets all the time, only one caret may need to be changed to multiply and then only one caret may have to be added. Hence, when we multiply by a generator the number of carets can grow by at most four. From the shortest word we can then obtain  a diagram which has at most $4L$ carets.

This diagram will have $x$ and $y$ generators mixed in each index (see Proposition \ref{leafexponent}), so it has to be modified so that only one $y$-caret appears for each index and with no left children, according to Lemma \ref{movingcarets}. We observe carefully the process described in that proof, and as  is indicated there, the number of carets can at most triple, because we may need to add two carets for each original one. Hence, the total number of carets of the diagram corresponding to the seminormal form is at most $12L$. Reducing and eliminating hidden cancellations can only decrease the number of carets. From here we have that $\|g\|\geq N(g)/12$. Summarizing all inequalities, we have
$$
\frac{D(g)}{48}\leq\frac{N(g)}{12}\leq\|g\|\leq 2\,D(g)\leq 8\,N(g)
$$
and this finishes the proof.\qed

\section{Distortion}

The similarities between the metric properties of \ft\ and $F$ allow us to state some distortion results for subgroups in \ft\ which are isomorphic to $F$.

Diagrams in \ft\ may have two types of carets. We can consider the subgroup of \ft\ of those elements which can be written with only one type. But if only one type (say $x$) of carets is used, then the combinatorics are exactly those of $F$, and the subgroup is obviously isomorphic to $F$. We will call $F_x$ the copy of $F$ inside \ft\ given by elements with a diagram containing  only $x$-carets. Observe that this subgroup is  generated by  the $x_i$ generators, or, if we prefer, generated by $x_0$ and $x_1$, clearly yielding a copy of $F$ inside \ft. We have the following result:

\begin{prop}\label{fxundist} The inclusion of $F_x$ inside \ft\ is undistorted.
\end{prop}

{\it Proof.} If an element of $F$ has a diagram (with regular equally-sided carets), then the same diagram but now with $x$-carets will give a reduced diagram for \ft. Observe also that the normal form in $F$ is also a normal form in \ft. Hence the number of carets is the same for both groups. Since in both cases the number of carets is equivalent to the word metric, we obtain the desired result.\qed

The $y$-sided counterpart of this result is a bit more complicated. We can clearly consider the subgroup of \ft\ generated by $y_0$ and $y_1$. This subgroup is also clearly isomorphic to $F$, for instance observe that the combinatorics of the diagrams are exactly the same, but diagrams here have $x$-carets on the spine and $y$-carets everywhere else. Hence, due to the bias we have chosen for the generators (and hence the normal forms) having $x$-careted spines, this subgroup is \emph{not} the same as the subgroup of \ft\ with \emph{all} carets of $y$-type. This latter subgroup will be called $F_z$ and it is generated by the following two elements:
$$
z_0=y_0y_2\qquad \mbox{ and } \qquad z_1=y_2y_4,
$$
see Figure \ref{F_z}. This is a proper subgroup of $F_y$, and is also isomorphic to $F$. But both these subgroups behave well.

\begin{figure}[tbh]
\centerline{\includegraphics[width=130mm]{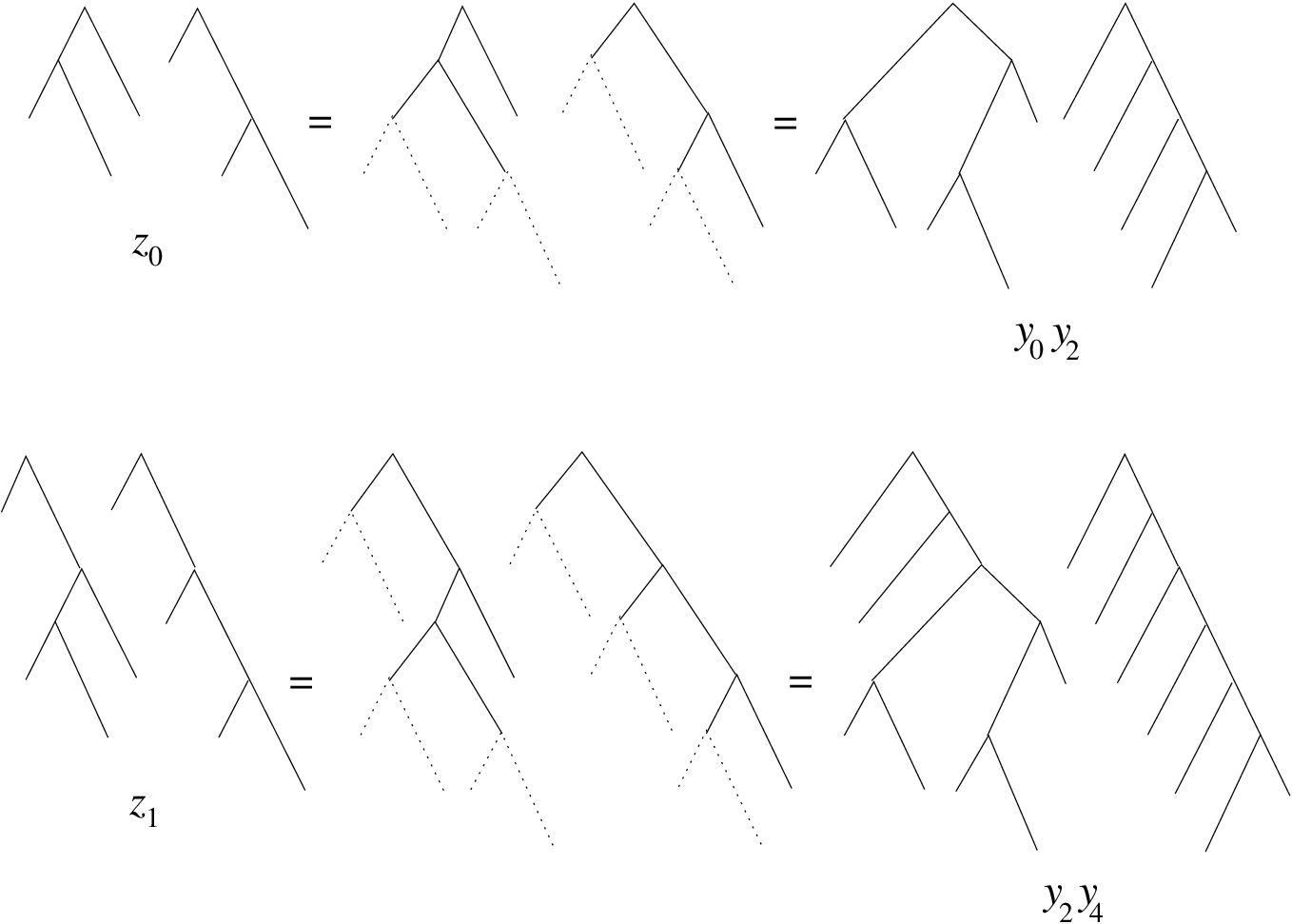}}
\caption{ \label{F_z} The generators of $F_z$ transformed into elements with $x$-careted spines. Actually these are their normal forms. Originally they only have $y$-carets, but their expressions in the $y$ generators (and hence their normal forms) need to have $x$-carets on the spine.}
\end{figure}

\begin{prop} The inclusions of $F_y$ and $F_z$ inside \ft\ are both undistorted.
\end{prop}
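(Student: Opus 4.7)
The plan is to extend the caret-counting argument used for $F_x$. One direction is immediate: each of $y_0,y_1$ lies in the finite generating set of \ft, and each $z_i$ has a bounded-length expression in those generators (using, e.g., $y_2=x_1^{-1}y_1x_1$). Hence the inclusions are Lipschitz, giving $\|g\|_{\ft}\le C\|g\|_{\mathrm{sub}}$ for $\mathrm{sub}\in\{F_y,F_z\}$.

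For the reverse inequality I would combine the metric estimate $\|g\|_{\ft}\asymp N(g)$ from the previous section with the bi-Lipschitz equivalence between word length and caret count in $F$, transported along the isomorphisms $F_y\cong F$ and $F_z\cong F$. The heart of the argument is a comparison between $N(g)$ and the caret count $M(g)$ of the natural subgroup diagram of $g$: the pure-$y$ reduced diagram for $g\in F_z$, or the reduced diagram whose only $x$-carets lie on the right spine for $g\in F_y$. For $g\in F_z$, the pure-$y$ reduced diagram is automatically \ft-reduced, since the local patterns that produce either an exposed matching caret pair or a hidden cancellation in \ft\ both require at least one $x$-caret, and so cannot occur in a pure-$y$ tree-pair. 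Hence this diagram and the \ft-normal form are two \ft-reduced diagrams of the same element whose source and target trees represent the same subdivisions of $[0,1]$; by Lemma~\ref{basicmoves} they are connected by a sequence of basic moves along the spines, and since each basic move preserves the total caret count, $M(g)=N(g)$. For $g\in F_y$ the same argument applies, with the additional ingredient that any interior $x$-caret pair in the \ft-normal form can be rewritten as a pair of $y$-carets using relation~$(5)$, $y_i^2=x_ix_{i+1}$, which is again caret-count preserving. Under the isomorphisms, $M(g)$ is comparable up to a bounded factor to the $F$-caret count of $g$ (coming from the fact that each $z_i=y_iy_{i+2}$ expands to two $y$-carets), so $\|g\|_{\mathrm{sub}}\asymp M(g)=N(g)\asymp\|g\|_{\ft}$.

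The main technical obstacle is verifying that the passage between the \ft-normal form and the natural subgroup diagram proceeds only through caret-count-preserving basic moves with no further reductions. For $F_z$ this is handled by the absence of $x$-carets in pure-$y$ tree-pairs, while for $F_y$ it requires in addition a careful application of Lemma~\ref{movingcarets} to control any $y$-carets with left children that might appear after the rewrites via $y_i^2=x_ix_{i+1}$.
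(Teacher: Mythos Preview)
Your argument contains a genuine gap. The central claim that $M(g)=N(g)$, deduced by invoking Lemma~\ref{basicmoves}, is incorrect. That lemma connects two trees \emph{representing the same subdivision}; but the pure-$y$ reduced diagram and the normal-form diagram of the same element need not have the same number of leaves, so their source (and target) trees need not represent the same subdivision at all. Concretely, for $g=z_0=y_0y_2$ the pure-$y$ diagram has two carets per tree, while the normal-form diagram (Figure~\ref{F_z}) has four: hence $M(z_0)=2\neq 4=N(z_0)$. Your assertion that ``hidden cancellations require at least one $x$-caret'' is also beside the point: the normal-form diagram is not merely some \ft-reduced diagram but a very specific one (all-$x$ target tree, $y$-carets only as childless leaves in the source), and passing to it from a pure-$y$ diagram genuinely adds carets via Lemma~\ref{movingcarets}, not just basic moves.

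The paper's proof avoids this issue entirely. For $F_z$ it argues by symmetry: the whole theory (spines, seminormal form, metric estimate) could have been set up with $y$-carets preferred and left-handed spines, and then $F_z$ would play exactly the role $F_x$ plays now, hence is undistorted for the same reason. For $F_y$ the paper does compare caret counts, but only up to a multiplicative constant: converting the natural $F_y$-diagram ($x$-spine, $y$-interior) to seminormal form via Lemma~\ref{movingcarets} at most triples the number of carets, which together with the easy Lipschitz direction gives $M(g)\asymp N(g)$. Your strategy can be repaired along these lines---replace the equalities by two-sided multiplicative bounds, using Lemma~\ref{changecarets} or Lemma~\ref{movingcarets} for each direction---but as written the key step does not go through.
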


{\it Proof.} The case of $F_z$ of elements with only $y$-carets is actually symmetric to $F_x$.  If instead we had chosen spines consisting of $y$-carets, and the generating set for $\ft$ by adding carets to this $y$-spine, we would have ended up with exactly the same situation as in Proposition \ref{fxundist} and the result for $F_x$ follows immediately.

For the subgroup $F_y$, elements here have $x$-carets in the spine and $y$-carets in the interior and left side of the trees. To compute the number of carets of their normal forms most of the $y$-carets have to be transformed into $x$-carets (except a few at the bottom with no left children), but as we have seen in Lemma \ref{movingcarets}, the number of carets can at most triple in this process. Hence the number of carets in $F$ and in \ft\ differ by a multiplicative constant, so the distances do too, and the inclusion is undistorted.\qed

It is interesting to remark that in previous examples of groups of the Thompson family where two different types of carets appear, copies of $F$ inside which use only one type of caret were always distorted. See \cite{wladisdist} and \cite{2vm}. Hence, \ft\ is the first known example of a group of the Thompson family whose elements have two types of carets but whose $F$ subgroups of a single type of caret are undistorted.

\section{Conclusions and future directions}

The properties of this group which are different from those of $F$ arise from the special type of carets and their relation. The basic move provides the new relation not seen before, which in turn causes torsion in the abelianisation. Furthermore, previous examples where we have two different types of carets  (Thompson--Stein groups, higher dimensional Thompson's groups) have distorted copies of $F$ inside, due to the fact that these carets do not merge well and lead to a fast growth of the number of carets. Here, due to the relation and the basic move, carets can have their type easily changed and hence their number does not grow. This is the reason why $F$ is undistorted in \ft.

The original motivation to study this group was the  question by Brin of whether every finitely presented group of piecewise-linear homeomorphisms of $\rr$ could be found as a subgroup of $F$. This is the reason the torsion in the abelianisation was considered, since it was not known whether a subgroup of $F$ could have torsion in the abelianisation. A finitely generated subgroup of $F$ was found whose abelianisation contains 2-torsion during the Oberwolfach workshop 1823b \emph{Cohomological and Metric Properties of Groups of Homeomorphisms of $\rr$} (see \cite{ober}). The authors would like to thank the participants of the workshop for very fruitful discussions. However, the following is still open:

\begin{question} Can a \emph{finitely presented} subgroup of $F$ can have torsion in its abelianisation?
\end{question}

 Brin's question has been answered by Lodha \cite{lodha_f23}, where it is proved that the Thompson-Stein group $F_{2,3}$ is not a subgroup of $F$. However, this group has non-cyclic slope group. Hence we believe that the following is still interesting:

 \begin{question} Does $\ft$ embed in $F$?
 \end{question}

Many of the properties for \ft\ are also present in the groups $T_{\tau}$ and $V_{\tau}$, the $T$ and $V$ versions of \ft. For instance, we can still perform basic moves and also have a copy of $\zz_2$ in the abelianisation. Hence, these groups are no longer simple, but  both have an index-two subgroup which is simple. These ideas have been developed in \cite{ttau}, which is the natural continuation of this paper.

\bibliography{pepsrefs}
\bibliographystyle{plain}

\end{document}